\documentclass[letterpaper,10pt]{IEEEtran}

\IEEEoverridecommandlockouts

\usepackage{color}
\usepackage{enumerate,graphicx,epstopdf}
\usepackage{amsmath,amssymb,bm,amsthm}
\usepackage{cite}
\usepackage{mathtools}

\usepackage[shortlabels]{enumitem}
\usepackage{array}
\usepackage{algpseudocode,algorithm,algorithmicx}
\usepackage{hyperref}
\usepackage{color}
\usepackage{booktabs,afterpage}
\usepackage{lipsum}
\usepackage[dvipsnames]{xcolor}
\usepackage[deletedmarkup=sout,authormarkup=superscript]{changes}
\usepackage[normalem]{ulem}

\usepackage{mwe}
\bibliographystyle{ieeetr}

\newtheorem{rmk}{Remark}

\newtheorem{lmm}{Lemma}
\newtheorem{prop}{Proposition}

\newcommand{\ints}{\mathbb{N}}

\newcommand{\reals}{\mathbb{R}}

\newcommand{\mc}[1]{\mathcal{#1}}

\def\blue{\textcolor{blue}}

\definechangesauthor[name={Marco}, color=NavyBlue]{MN}
\definechangesauthor[name={Yaashia}, color=Plum]{YG}

\usepackage{enumitem}
\newlist{steps}{enumerate}{1}
\setlist[steps, 1]{label = Step \arabic*:}
\setlength{\marginparwidth}{2cm}

\begin{document}
\title{Finite-Time Computation of Polyhedral\\ Input-Saturated Output-Admissible Sets
\thanks{The authors are with the University of Colorado, Boulder.}
\thanks{This research is supported by the NSF-CMMI Award  2046212.}}

\author{Yaashia Gautam, Marco M. Nicotra}

\maketitle

\begin{abstract}
The paper introduces a novel algorithm for computing the output admissible set of linear discrete-time systems subject to input saturation. The proposed method takes advantage of the piecewise-affine dynamics to propagate the output constraints within the non-saturated and saturated regions. The constraints are then shared between regions to ensure a proper transition from one region to another. The resulting algorithm generates a set that is proven to be polyhedral, safe, positively invariant, and finitely determined. Moreover, the set is also proven to be strictly larger than the maximal output admissible set that would be obtained by treating input saturation as a constraint.

\end{abstract}

\section{Introduction}
Maximal Output Admissible Sets (MOAS) were originally introduced in \cite{gilbert1991linear} and are a cornerstone of constrained control theory. Given an autonomous system subject to constraints, the MOAS identifies all the initial conditions that give rise to a constraint-admissible response. This definition is often leveraged to guarantee the recursive feasibility of command governors \cite{garone2017reference} 
and model predictive control \cite{mayne2000mpc}. As a result, significant research effort has been dedicated to the computation of the MOAS. A systematic algorithm for computing the MOAS of discrete-time linear systems subject to polyhedral constraints can be found in \cite{gilbert1991linear}, 
and numerous extensions have been proposed to address bounded disturbances \cite{kolmanovsky1995maximal,652329}, nonlinear dynamics \cite{rachik2007maximal}, and continuous-time systems \cite{DARUP20145574}.

Although the MOAS is ``maximal'' under its definition, pioneering results \cite{SaturationsNOTconstraints,DEDONA200257} argue that it is possible to identify significantly larger invariant sets by treating input saturation as a nonlinearity, as opposed to a regular constraint. The resulting set is generally non-convex and challenging to compute. 
In the absence of output constraints, however, existing literature provides several tools for estimating the region of attraction of discrete-time linear systems subject to input saturation. Ellipsoidal approximations have been proposed in \cite{book,1583157,BERTSEKAS1971233}, and provably larger polyhedral inner approximations have been constructed in \cite{ROA1,ROA2,ALAMO20061515} using recursive algorithms that take advantage of the piecewise-affine nature of the system.

This paper provides a systematic method for computing the Input-Saturated Output-Admissible Set (ISOAS), which is a forward invariant inner approximation of the MOAS for linear systems subject to saturated state feedback controllers. 
This is achieved by dividing the state space into saturated and non-saturated regions and developing suitable methods for propagating the constraints within each region and sharing them between regions. Animations depicting the execution of the algorithm can be found at \blue{\url{https://www.colorado.edu/faculty/nicotra/2022/06/20/poc-computation-input-saturated-output-admissible-sets}}. 
The paper is organized as follows: Section II provides the notation used in the paper. Section III introduces the necessary assumptions for the formulation of ISOAS and introduces the problem statement along with a solution approach. Sections IV and V detail the two key steps of the algorithm, i.e. constraint propagation and constraint sharing. Numerical examples are then given in Section VI. 

\section{Notation}
Given a closed set $\mc S$, the boundary of $\mc S$ is denoted with $\partial \mc S$ and its interior is denoted with $\mathrm{Int}\,\mc S=\mc S\!\smallsetminus\!\partial \mc S$. The distance between a point $z$ and the set $\mc S$ is denoted as $\textrm{dist}(z,\mc S)=\min\|z-s\|,~\textrm{s.t.}~s\in\mc S$. Given a set $\mc S=\{x~|~Hx\leq h\}$ such that $0\in\mathrm{Int}\,\mc S$, let $\epsilon\in(0,1)$ be a (typically small) parameter. We denote the tightened set $(1-\epsilon)\,\mc S=\{x~|~Hx\leq (1-\epsilon)h\}$, which satisfies $0\in\mathrm{Int}\,(1-\epsilon)\,\mc S$ and $(1-\epsilon)\,\mc S\subset\mathrm{Int}\,\mc S$. The $\ell$-th row of a matrix $H$ and a column vector $h$ is identified with $[H]_\ell$ and $[h]_\ell$, respectively.

\section{Problem Setting}\label{sec:probset}
Consider the linear time-invariant system 
\begin{subequations} \label{eq:LTI}
\begin{align}
x_{k+1} = A x_k + B u_k \label{eq:LTI_system},\\
y_{k} = C x_k + D u_k \label{eq:LTI_outputs},
\end{align}
\end{subequations}
with $k\in \ints$, $x \in \reals^{n}$, $u\in\reals$, and $y \in \reals^{l}$. Hereafter, we assume $(A,B)$ stabilizable, $(A,C)$ observable, and $B\neq0$
. The system is subject to input saturation
\begin{equation}\label{eq:Input_sat}
 \underline u\leq u_k\leq \overline u,\qquad\forall k\in\ints
\end{equation}
and polyhedral output constraints
\begin{equation}\label{eq:output_constraints}
    H y_k\leq h ,\qquad\forall k\in\ints.
\end{equation}
Let $\mathcal U=\{u ~|~\underline u\leq u\leq \overline u\}$ and $\mc{Y} = \{y~|~H y \leq h\}$ denote the input saturation set and the output constraint set. Hereafter, we assume that $\mc U$ and $\mc{Y}$ are non-empty, compact, and contain the origin in their respective interior.\smallskip

Let $G_x\in\reals^{n}\backslash\{0\}$ and $G_u\in\reals$ satisfy
\begin{equation}\label{eq:Kernel}
    \begin{bmatrix}G_x\\G_u\end{bmatrix} \in \mathrm{null}\left(\begin{bmatrix}
		A- I & B
	\end{bmatrix}\right),
\end{equation}
and let $G_y\!=\!CG_x+DG_u$. Then, the system \eqref{eq:LTI} satisfies $G_xr=AG_xr+BG_ur$ and $G_yr=CG_xr+DG_ur$, where $r\in\reals$ is a parameterization of the equilibrium manifold, which can be used as a steady-state reference for the system.\smallskip

To ensure that \eqref{eq:Input_sat}-\eqref{eq:output_constraints} are satisfied at a target equilibrium, we define the set of steady-state admissible references
\begin{equation}
    \mc R=\left\{r~\left|~
    \begin{bmatrix}
        ~~G_u\\-G_u\\HG_y
    \end{bmatrix}r\leq\begin{bmatrix}
        \overline u\\-\underline u~~\\h
    \end{bmatrix}
    \right.\right\}\!,
\end{equation}
which satisfies $0\in\mathrm{Int}\;\mc R$ due to previous assumptions. Given $\epsilon\in(0,1)$, we then define the set of strictly steady-state admissible state and reference equilibrium pairs
\begin{equation}\label{eq:ssa_equilibria}
    \Sigma^\epsilon=\left\{(x,r)~\left|~\begin{array}{l} x = G_xr\\[2 pt] r\in(1-\epsilon)\mc R
    \end{array}\right.\right\},
\end{equation}
which satisfies $0\in\mathrm{Int}\,\Sigma^\epsilon$.

\subsection{Maximal Output Admissible Set}
Since Maximal Output Admissible Sets are defined for \emph{unforced} systems \cite{gilbert1991linear}, the traditional approach for computing the MOAS is to introduce a prestabilizing linear controller
\begin{equation}\label{eq:lin_prestab}
    u_k = G_ur-K(x_k-G_xr),
\end{equation}
with $K$ such that $A-BK$ is Schur, and rewrite \eqref{eq:LTI} as 
\begin{subequations}\label{eq:lin_dyn}
\begin{align}
x_{k+1} = \hat A x_k + \hat B r \label{eq:lin_system},\\
y_{k} = \hat C x_k + \hat D r \label{eq:lin_outputs},
\end{align}
\end{subequations}
with $\hat A=A-BK$, $\hat B=B(G_u+KG_x)$, $\hat C = C-DK$, and $\hat D = D(G_u+KG_x)$. Then, given the input and output constraints \eqref{eq:Input_sat}-\eqref{eq:output_constraints}, the MOAS is defined as
\begin{equation}\label{eq:O_inf}
    \mc O_\infty=\left\{(x,r)~\left|~\begin{array}{l}\hat u'(k|x,r)\in\mathcal{U},~\forall k\in\mathbb N\\[2 pt]
        \hat y'(k|x,r)\in\mathcal{Y},~\forall k\in\mathbb N
    \end{array}\right.\right\},
\end{equation}
where
\begin{equation}
\begin{bmatrix}
\hat u'(k|x,r)\\
\hat y'(k|x,r)
\end{bmatrix}=\begin{bmatrix}
-K\\\hat C
\end{bmatrix} \hat x'(k|x,r)+\begin{bmatrix}
G_u+KG_x\\
\hat D
\end{bmatrix}r,
\end{equation}
and 
\begin{equation}\label{eq:x_pred}
\hat x'(k|x,r) = \hat A^k x + \sum_{j=1}^k\hat A^{j-1}\hat B r
\end{equation}
are the closed-form solutions to \eqref{eq:lin_dyn}, evaluated at timestep $k$, given $x_0=x$. Since $\mc O_\infty$ may not be computable in finite time, it is often preferable  \cite{garone2017reference} to compute its inner approximation
\begin{equation}
    \tilde{\mc O}_\infty=\mc O_\infty\cap\{\reals^n\times(1-\epsilon)\mc R\}.
\end{equation}

\subsection{Maximal Input-Saturated Output-Admissible Set}
An interesting aspect of $\mc O_\infty$ is that it depends on the prestabilizing controller \eqref{eq:lin_prestab}. As motivated in \cite{SaturationsNOTconstraints}, it is, therefore, possible to obtain a significantly larger set by replacing \eqref{eq:lin_prestab} with the saturated prestabilizing controller
\begin{equation}\label{eq:control_law}
    u_k = \sigma(G_ur-K(x_k-G_xr)),
\end{equation} 
where $\sigma:\reals\to\mc U$ is the piecewise affine function
\begin{equation}\label{eq:saturation}
    \sigma(u) = \left\{\begin{array}{ll}\overline{u} & \textrm{if}~u\geq \overline{u}\\
    u & \textrm{if}~u\in(\,\underline u\,,\overline{u})\\
    \underline u & \textrm{if}~u\leq\underline u.\end{array}\right.
\end{equation}
Doing so enables us to define the Maximal Input-Saturated Output-Admissible Set
\begin{equation}\label{eq:Omega_inf}
    \Omega_\infty=\left\{(x,r)~\left|~\begin{array}{l}\hat y(k|x,r)\in\mathcal{Y},~\forall k\in\mathbb N
    \end{array}\right.\right\},
\end{equation}
where $\hat y(k|x,r)$ is the solution to
\begin{equation}\label{eq:sat_traj}
\left\{\begin{array}{ll}
\hat x(k+1|x,r)=A\hat x(k|x,r)+B\hat u(k|x,r),\\
\hat x(0|x,r)=x,\\
\hat u(k|x,r)=\sigma(G_ur-K(\hat x(k|x,r)-G_xr)),\\
\hat y(k|x,r)=C\hat x(k|x,r)+D\hat u(k|x,r).
\end{array}\right.
\end{equation}
Unfortunately, the set $\Omega_\infty$ is implicitly defined. Moreover, the presence of nonlinearities implies that $\Omega_\infty$ is, in general, non-convex. Both these properties make the computation of $\Omega_\infty$ intractable, which is why the approach in \cite{SaturationsNOTconstraints} is limited to checking if a given state/reference pair satisfies $(x,r)\in\Omega_\infty$.\smallskip

\subsection{Problem Statement}
Given system \eqref{eq:LTI} subject to the saturated controller \eqref{eq:control_law}, this paper aims to compute a set $\tilde\Omega_\infty$ that satisfies 
\begin{equation}
    \tilde{\mc O}_\infty\subset\tilde\Omega_\infty\subseteq\Omega_\infty.
\end{equation}
The envisioned set should be
\begin{enumerate}
    \item \textbf{Polyhedral:} There exist $\mc H_x\in\reals^{\ell\times n}$, $\mc H_r\in\reals^\ell$, $\eta\in\reals^\ell$ such that $\tilde\Omega_\infty=\{(x,r)~|~\mc H_xx+\mc H_rr\leq \eta\}$;
    \item \textbf{Finitely Determined:} The number of planar constraints $\ell$ used to define $\mc H_x\in\reals^{\ell\times n}$, $\mc H_r\in\reals^\ell$, $\eta\in\reals^\ell$, is finite;
    \item \textbf{Safe:} The constrained output of the controlled system is such that $(x_k,r)\in\tilde\Omega_\infty~\Rightarrow~y_k\in\mathcal{Y}$;
    \item \textbf{Forward Invariant:} The state update of the controlled system satisfies $(x_k,r)\in\tilde\Omega_\infty~\Rightarrow~(x_{k+1},r)\in\tilde\Omega_\infty$.
\end{enumerate}
The resulting set $\tilde\Omega_\infty$ will hereafter be referred to as the Input-Saturated Output-Admissible Set (ISOAS). 

\subsection{Solution Approach}
The solution featured in this paper leverages the piecewise-affine nature of the prestabilized system. To do so, we first partition the state/reference space $(x,r)\in\reals^n\times(1-\epsilon)\mc R$ into three closed sets based on \eqref{eq:control_law}-\eqref{eq:saturation}, i.e.
\begin{itemize}
    \item \textbf{Non-Saturated Region:} \begin{equation}\label{eq:Nonsat}
        \!\!\!\!\!\mc S=\left\{(x,r)~\left|~ \begin{array}{l}
    G_ur-K(x_k-G_xr)\in[\underline u ,\overline u]\\
    r\in(1-\epsilon)\mc R
    \end{array}\right.\right\},
    \end{equation}
    \item \textbf{Upper-Saturated Region:} \begin{equation}\label{eq:Upsat}
        \!\!\!\!\!\overline{\mc S}=\left\{(x,r)~\left|~ \begin{array}{l}
    G_ur-K(x_k-G_xr)\geq\overline u\\
    r\in(1-\epsilon)\mc R
    \end{array}\quad~\right.\right\},\end{equation}
    \item \textbf{Lower-Saturated Region:} \begin{equation}\label{eq:Lowsat}
        \!\!\!\!\!\underline{\mc S}=\left\{(x,r)~\left|~ \begin{array}{l}
    G_ur-K(x_k-G_xr)\leq\underline u\\
    r\in(1-\epsilon)\mc R
    \end{array}\quad~\right.\right\}.\end{equation}
\end{itemize}\smallskip
We then propose an iterative procedure to
\begin{itemize}
    \item \textbf{Propagate} constraints within each region;
    \item \textbf{Share} constraints between regions.
\end{itemize}
Finally, we introduce constraint elimination strategies to prevent the propagation/sharing of redundant constraints.

\begin{rmk}
Although this paper is limited to the single-input case $u\in\reals$, the approach can be generalized to the multi-input case $u\in\reals^m$, with $m\leq n$. Doing so would cause the computational cost to grow exponentially since the number of lower/upper/non-saturated input regions is $3^m$. This scaling is consistent with established results in the literature \cite{ROA2}.
\end{rmk}

\section{Constraint Propagation}
The idea behind constraint propagation is to take advantage of the difference equation \eqref{eq:sat_traj} to identify the set of states that will not violate a given constraint in a single timestep. The resulting one-step constraint enforcement condition is then iteratively defined as a new constraint. For ease of notation, the state/reference pair $(x,r)$ will be grouped into a single vector $z\in\reals^{n+1}$, thereby leading to the following definitions
\begin{equation}
    z = \begin{bmatrix}
    x\\r
    \end{bmatrix},\qquad
    \hat z(k|z) = \begin{bmatrix}
    \hat x(k|x,r)\\r
    \end{bmatrix}.
\end{equation}
Since the system dynamics are piecewise-affine, the saturated and non-saturated regions will be addressed separately.

\subsection{Non-Saturated Region}
Consider $z\in\mc S$ and let $\kappa(z)\in\ints$ be such that $\hat z(k|z)\in\mc S,$ $\forall k\in[0,\kappa(z)]$. Within the time interval $[0,\kappa(z)]$, the system dynamics \eqref{eq:sat_traj} satisfy
\begin{equation}\label{eq:lin_DE}
    \hat z(k+1|z)=\begin{bmatrix}
   \hat A& \hat B\\ 0 & 1
   \end{bmatrix} \hat z(k|z)
\end{equation}
Noting that the $k$-step constraint enforcement condition $\hat y(k|z)\in\mc Y$ can be written as
\begin{equation}
    H\begin{bmatrix}
    \hat C & \hat D
    \end{bmatrix}\hat z(k|z)\leq h,
\end{equation}
it follows from $\hat x(0|z)=x$ that the constraint $\hat y(0|z)\in\mc Y$ can be rewritten as $H_0z\leq h_0$, with
\begin{equation}\label{eq:CPlin_init}
    H_0 = H\begin{bmatrix}
    \hat C & \hat D
    \end{bmatrix},\qquad h_0 = h.
\end{equation}
It then follows from \eqref{eq:lin_DE} that, $\forall k\in[0,\kappa(z)]$, the constraint $\hat y(k|z)\in\mc Y$ can be rewritten as $H_kz\leq h_k$, where $(H_k,h_k)$ satisfy the recursion
\begin{equation}\label{eq:CPlin_rec}
    H_{k+1} = H_k
   \begin{bmatrix}
   \hat A& \hat B\\ 0 & 1
   \end{bmatrix},\qquad h_{k+1}=h_k.
\end{equation}

\begin{prop}\label{prop:ConstraintRecursion}
Given the recursion \eqref{eq:CPlin_init}-\eqref{eq:CPlin_rec}, the set 
\begin{equation}\label{eq:Q_nonsat}
    \mc{Q}=\mc S\cap\bigcap_{k=0}^\infty\left\{z~\left|~ H_k z\leq  h_k\right.\right\}
\end{equation}
is compact, non-zero measure, and finitely determined. In addition, $z\in \tilde{\mc O}_\infty$ implies $\hat z(k|z)\in\tilde{\mc O}_\infty,~\forall k\in\mathbb N$, and $z\in\mc Q\smallsetminus\tilde{\mc O}_\infty$ implies the existence of $\kappa(z)\in\ints$ satisfying 
\begin{subequations}
\begin{align}
        \hat z(k|z)&\in\mc Q,\qquad \forall k\in[0,\kappa(z)]\\
        \hat z(\kappa(z)\!+\!1|z)&\not\in\mc Q.
\end{align}
\end{subequations}
Moreover,
\begin{subequations}
\begin{align}
        H_k\,\hat z(\kappa(z)\!+\!1|z)&\leq h_k,\qquad \forall k\in\ints\\
        \hat z(\kappa(z)\!+\!1|z)&\not\in\mc S.
\end{align}
\end{subequations}
\end{prop}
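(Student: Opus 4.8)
The plan rests on one structural observation: on the non-saturated region $\mc S$ the saturated recursion \eqref{eq:sat_traj} coincides exactly with the linear recursion \eqref{eq:lin_DE}, while the propagated half-spaces $H_kz\le h_k$ encode precisely the linear output predictions, i.e. $H_kz\le h_k\Leftrightarrow\hat y'(k|x,r)\in\mc Y$. These two facts give the inclusion $\tilde{\mc O}_\infty\subseteq\mc Q$: membership in $\tilde{\mc O}_\infty$ forces the $k=0$ input into $\mc U$ and $r\in(1-\epsilon)\mc R$ (hence $z\in\mc S$), and forces every linear output into $\mc Y$ (hence $H_kz\le h_k$ for all $k$). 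I would use this inclusion, together with $0\in\Int\tilde{\mc O}_\infty$, throughout.

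\emph{First claim.} Closedness is immediate, as $\mc Q$ is an intersection of the closed set $\mc S$ with closed half-spaces, and $0\in\Int\mc Q$ (hence non-zero measure) follows from $\tilde{\mc O}_\infty\subseteq\mc Q$ and $0\in\Int\tilde{\mc O}_\infty$ via $\Int\tilde{\mc O}_\infty\subseteq\Int\mc Q$. For boundedness I would note that $r$ ranges in the bounded set $(1-\epsilon)\mc R$ and, since $\hat A$ is Schur, the infinite family $H_kz\le h_k$ constrains the $x$-component through observability of the closed-loop pair $(\hat A,\hat C)$ inherited from $(A,C)$; this is the classical Gilbert--Tan boundedness argument. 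Finite determination follows from the same framework: because $r\in(1-\epsilon)\mc R$ keeps every admissible steady-state output strictly inside $\mc Y$, there is uniform slack as $k\to\infty$, so only finitely many half-spaces are non-redundant and $\bigcap_{k=0}^\infty$ collapses to a finite intersection.

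\emph{Second and third claims.} Forward invariance of $\tilde{\mc O}_\infty$ is direct: if $z\in\tilde{\mc O}_\infty$ then no input ever saturates, so $\hat z(k|z)$ equals the linear prediction, the tail of which is again admissible by the standard shift property of $\mc O_\infty$, and $r$ is unchanged; hence $\hat z(k|z)\in\tilde{\mc O}_\infty$ for all $k$. The third claim I prove by contraposition: if $\hat z(k|z)\in\mc Q$ for every $k$, then the trajectory never leaves $\mc S$, so every input lies in $\mc U$ and the dynamics are linear, while $z\in\mc Q$ certifies that every (linear, hence true) output lies in $\mc Y$; therefore $z\in\tilde{\mc O}_\infty$. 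Consequently any $z\in\mc Q\smallsetminus\tilde{\mc O}_\infty$ must exit $\mc Q$ in finite time, and I take $\kappa(z)$ to be the last index for which $\hat z(k|z)\in\mc Q$ on $[0,\kappa(z)]$.

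\emph{Fourth claim.} Since $\hat z(\kappa(z)|z)\in\mc Q\subseteq\mc S$, the step to $\hat z(\kappa(z)\!+\!1|z)$ is linear, governed by the matrix in \eqref{eq:lin_DE}. Combining this with the recursion \eqref{eq:CPlin_rec}, $H_k\begin{bmatrix}\hat A&\hat B\\0&1\end{bmatrix}=H_{k+1}$ and $h_{k+1}=h_k$, yields $H_k\,\hat z(\kappa(z)\!+\!1|z)=H_{k+1}\,\hat z(\kappa(z)|z)\le h_{k+1}=h_k$ for every $k$, the first ``moreover'' identity. Thus the exit point satisfies every output half-space, so, its $r$-component still lying in $(1-\epsilon)\mc R$, the only way it can fail $\mc Q=\mc S\cap\bigcap_{k}\{z\,|\,H_kz\le h_k\}$ is through the region constraint, forcing $\hat z(\kappa(z)\!+\!1|z)\notin\mc S$. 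The trajectory therefore leaves $\mc Q$ by saturating rather than by violating an output constraint. I expect the genuine obstacle to be the boundedness and finite-determination parts of the first claim, where closed-loop observability and the uniform slack furnished by the $\epsilon$-tightening must be argued carefully; once the dynamics are recognized to coincide with the linear ones on $\mc S$, the remaining trajectory claims follow almost by bookkeeping.
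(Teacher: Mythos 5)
Your overall structure is sound, and on the trajectory claims you are actually more careful than the paper: the paper merely asserts that $z\in\mc Q\smallsetminus\tilde{\mc O}_\infty$ must exit $\mc S$ in finite time, whereas you prove it by contraposition (a trajectory that stays in $\mc Q$ forever never saturates, so the linear predictions are exact and $z\in\tilde{\mc O}_\infty$), and your algebraic verification of the ``moreover'' claims, $H_k\,\hat z(\kappa(z)\!+\!1|z)=H_{k+1}\,\hat z(\kappa(z)|z)\leq h_{k+1}=h_k$, hence the exit can only occur through the region constraint, is exactly the paper's ``by construction'' step spelled out. Where you genuinely diverge is finite determination. The paper argues by dichotomy: for $z\in\tilde{\mc O}_\infty$ the trajectory enters a $\delta$-neighborhood of $\Sigma^\epsilon$ in finite time; for $z\in\mc Q\smallsetminus\tilde{\mc O}_\infty$ it exits $\mc S$ in finite time; then it sets $k^*=\max_{z\in\mc Q}\kappa(z)$ and claims finiteness from compactness of $\mc Q$ alone. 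You instead use the classical Gilbert--Tan redundancy argument: on a bounded set cut out by finitely many constraints, $\hat y'(k|x,r)\to G_yr$ uniformly, and $HG_yr\leq(1-\epsilon)h$ uniformly over $r\in(1-\epsilon)\mc R$, so all constraints beyond some finite index are redundant. This is a real difference, and arguably your route is the more rigorous one: the paper's maximization step is delicate (a pointwise-finite $\kappa(\cdot)$ on a compact set need not be bounded without some semicontinuity, and $\kappa(z)$ can grow without bound as $z$ approaches $\partial\tilde{\mc O}_\infty$ from within $\mc Q\smallsetminus\tilde{\mc O}_\infty$), whereas your argument never references trajectories at all---which is appropriate, since finite determination is a statement about the half-spaces $H_kz\leq h_k$, not about solutions of \eqref{eq:sat_traj}.

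The one step that fails as stated is your claim that observability of $(\hat A,\hat C)$ is ``inherited from $(A,C)$.'' It is not: state feedback can destroy observability. A scalar example: $A=0.5$, $B=C=D=1$, $K=1$ gives $\hat A=-0.5$ (Schur) and $\hat C=C-DK=0$, so $(\hat A,\hat C)$ is completely unobservable although $(A,C)$ is observable. Even with $D=0$ one can arrange a cancellation in $\reals^3$ (e.g.\ $A$ the nilpotent shift, $B=e_1$, $C=e_1^\top$, $K=[k_1~1~0]$) producing a direction $v\in\ker K$ that is unobservable for $(\hat A,\hat C)$; along such a direction every constraint defining $\mc Q$---including the input constraint embedded in $\mc S$, which only acts at time $0$---is blind, so $\mc Q$ contains the entire line $z+t(v,0)$ and is unbounded. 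Compactness of $\mc Q$ therefore genuinely requires an observability-type hypothesis on the closed-loop constraint pair, and it does not follow from the paper's standing assumption that $(A,C)$ is observable. To be fair, this gap is shared with the paper, whose compactness proof is a bare citation of \cite[Theorem 2.1]{gilbert1991linear} without verifying its observability hypothesis; but since you made the inheritance claim explicit, you should either impose closed-loop observability as an assumption or verify it for the systems under consideration.
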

\begin{proof}
Compactness of $\mc Q$ follows from \cite[Theorem 2.1]{gilbert1991linear}. Since $\tilde{\mc O}_\infty$ is output admissible, forward invariant, and does not cause input saturation, $\tilde{\mc O}_\infty\subset\mc Q$. Thus, $\mc Q$ has non-zero measure because $\tilde{\mc O}_\infty$ has non-zero measure.

By construction, \eqref{eq:Q_nonsat} is such that $\hat z(k|z)\in\mc Q$ entails $H_j\,\hat z(k+1|z)\leq h_j,~\forall j\in\ints$. As a result, $\hat z(k+1|z)\not\in\mc Q$ can only be satisfied if $\hat z(k+1|z)\not\in\mc S$. To show that the set is finitely determined, we note:
\begin{itemize}
    \item If $z\in \tilde{\mc O}_\infty$, $\exists\kappa(z)\in\ints$ such that $\textrm{dist}(\hat z(k|z),\Sigma^\epsilon)\leq\delta,$ $\forall k>\kappa(z)$, where $\delta>0$ is such that
    $$
        \{z~|~\textrm{dist}(z,\Sigma^\epsilon)\leq\delta\}\subset\tilde{\mc O}_\infty.
    $$
    The constraints $H_kz\leq h_k$ are therefore redundant for all $k>\kappa(z)$.
    \item If $z\in\mc Q\smallsetminus \tilde{\mc O}_\infty$ there must exists $\kappa(z)\in\ints$ such that $\hat z(\kappa(z)+1|z)\not\in\mc S$. Thus, the constraints $H_kz\leq h_k$ are redundant for all $k>\kappa(z)$;
    
\end{itemize}
Since $\kappa(z)$ is finite and uniquely defined in the compact set $\mc Q$, the solution to $k^*=\max\,\kappa(z),~\mathrm{s.t.}~z\in\mc Q$ is finite. Therefore, the constraints $H_kz\leq h_k$ are redundant for all $k>k^*$.
\end{proof}

Given $\mc H_0=\{z~|~H_0z\leq h_0\}$, the constraint recursion \eqref{eq:CPlin_rec} gives place to a set function $f$ that outputs \eqref{eq:Q_nonsat} as
\begin{equation}\label{eq:setOP_nonsat}
    \mc Q=f(\mc S,\mc H_0).
\end{equation}
This function is detailed in Algorithm 1.

\subsection*{Redundant Row Reduction}

Proposition \ref{prop:ConstraintRecursion} states that \eqref{eq:Q_nonsat} can be computed in finite time. This can be done by performing a redundant row reduction after each constraint update \eqref{eq:CPlin_rec} until $(H_{k+1}, h_{k+1})$ are empty. To check whether the $\ell$-th row of $(H_{k+1}, h_{k+1})$ is redundant, it is sufficient to solve the Linear Program (LP) 
\begin{equation}\label{eq:RowRed}
    \begin{array}{rrll}
        z^*_\ell=\textrm{argmax} & [H_{k+1}]_\ell\: z \\
        \mathrm{s.t.} & [H_{k+1}]_l \:z&\!\!\!\!\!\leq [ h_{k+1}]_l,&\forall l\neq\ell,\\&
        H_j\:z&\!\!\!\!\!\leq h_j,&\forall j\in[0,k],\\&
        z&\!\!\!\!\!\in \mc S.
    \end{array}
\end{equation}
If $[H_{k+1}]_\ell\: z^*_\ell\leq[h_{k+1}]_\ell$, the $\ell$-th row of $(H_{k+1}, h_{k+1})$ is redundant and should be eliminated.

\begin{algorithm}[t]
\caption{Constraint Propagation, Non-saturated}\label{alg:C_prop}
\begin{algorithmic}[1]
  \vspace{2 pt}\Statex
          \textbf{Inputs:} Polyhedron $\mc S$, constraints $H_0$, $h_0$
  \vspace{2 pt}\Statex
          \textbf{Outputs:} Polyhedron $\mc Q$, constraints $H_\infty$, $h_\infty$
          \vspace{-5 pt}
          \Statex \hrulefill
          \vspace{3 pt}
          \State $H_\infty=[~]$, $\;h_\infty=[~]$, $k=0$ 

        \While{$\textrm{length}(h_k)\neq0$}
          \State $(H_{k+1},h_{k+1})\leftarrow \eqref{eq:CPlin_rec}$\hfill\emph{Constraint Update}
          \For{$\ell\in[1,\mathrm{length}(h_k)]$}\hfill\emph{Row Reduction}
          \State $z^*_\ell\leftarrow\eqref{eq:RowRed}$ 
          \If{$[H_{k+1}]_\ell\: z^*_\ell\leq[h_{k+1}]_\ell$} 
          \State Eliminate $\ell$-th row of $(H_{k+1},h_{k+1})$
          \EndIf
          \EndFor
          \State $H_\infty=[H_\infty;H_k]$, $\;h_\infty=[\;h_\infty;h_k]$\hfill\emph{Add Constraints}
          \State $\quad\: k = k+1$
          \EndWhile
          \State $\mc Q = \mc S \cap \{z~|~H_\infty z\leq h_\infty\}$
\State \textbf{return} $\mc Q$, $H_\infty$, $h_\infty$
\end{algorithmic}
\end{algorithm}

\subsection{Saturated Regions}\label{ssec:saturated}
Since the upper-saturated and lower-saturated regions fundamentally behave the same, this paper only addresses the upper-saturated case. Consider $z\in\overline{\mc S}$ and let $\kappa(z)\in\ints$ be such that $\hat z(k|z)\in\overline{\mc S},$ $\forall k\in[0,\kappa(z)]$. Within the time interval $[0,\kappa(z)]$, the system dynamics satisfy
\begin{equation}\label{eq:sat_DE}
    \hat z(k+1|z)=\begin{bmatrix}
    A& 0\\ 0 & 1
   \end{bmatrix} \hat z(k|z)+\begin{bmatrix}
    B\,\overline u\\0
    \end{bmatrix}.
\end{equation}

\begin{lmm}\label{lmm:ControlAuth}
The dynamic model \eqref{eq:sat_DE} admits the equilibrium point $\bar z = [\bar x^T~0]^T\in\overline{\mc S}$ if and only if $I-A$ is invertible and $1+K(I-A)^{-1}B\leq0$. 
\end{lmm}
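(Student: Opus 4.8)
The plan is to reduce the fixed-point equation of \eqref{eq:sat_DE} to a linear system in $\bar x$ and then enforce membership in $\overline{\mc S}$ separately. Substituting $\bar z = [\bar x^T~0]^T$ into \eqref{eq:sat_DE}, the reference (second) block reduces to the identity $0 = 0$, while the state (first) block yields the steady-state condition $(I-A)\bar x = B\overline u$. I would then argue that a well-defined equilibrium point $\bar x$ exists if and only if $I-A$ is invertible: when $I-A$ is invertible, $\bar x = (I-A)^{-1}B\overline u$ is the unique solution; when $I-A$ is singular, the solution set is either empty or an affine subspace of positive dimension, so no single equilibrium point of the prescribed form exists. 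This both establishes the invertibility condition and pins down $\bar x$.

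The second step translates $\bar z \in \overline{\mc S}$ into the stated scalar inequality. Because $\bar r = 0$ and $0 \in \Int \mc R$, the reference constraint $r \in (1-\epsilon)\mc R$ in \eqref{eq:Upsat} holds automatically. The remaining upper-saturation constraint $G_u r - K(\bar x - G_x r) \geq \overline u$ collapses at $\bar r = 0$ to $-K\bar x \geq \overline u$; substituting $\bar x = (I-A)^{-1}B\overline u$ gives $-K(I-A)^{-1}B\,\overline u \geq \overline u$.

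The step that makes the final rearrangement valid is the observation that $0 \in \Int \mc U$, hence $\overline u > 0$. Dividing through by $\overline u$ therefore preserves the inequality direction and yields $-K(I-A)^{-1}B \geq 1$, i.e.\ $1 + K(I-A)^{-1}B \leq 0$. Since every step in the reduction is an equivalence, reading the chain backwards delivers the converse implication.

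I expect the main obstacle to be the ``only if'' part concerning invertibility of $I-A$, since relation \eqref{eq:Kernel} gives $(I-A)G_x = BG_u$ with $G_x \neq 0$, so when $G_u \neq 0$ the vector $B\overline u$ genuinely lies in $\Im(I-A)$ and solutions of $(I-A)\bar x = B\overline u$ can persist even when $I-A$ is singular. The resolution is that such solutions are then non-unique, forming an affine subspace rather than an isolated point, so the model does not admit \emph{the} equilibrium point; this is what rules out the singular case and keeps the stated equivalence intact. The remaining manipulations are routine linear algebra.
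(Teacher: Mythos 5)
Your proposal is correct and follows essentially the same route as the paper's proof: reduce the fixed point of \eqref{eq:sat_DE} to $\bar x=(I-A)^{-1}B\,\overline u$, translate $\bar z\in\overline{\mc S}$ at $r=0$ into $-K\bar x\geq\overline u$, and use $0\in\mathrm{Int}\,\mc U\Rightarrow\overline u>0$ to rearrange this into $1+K(I-A)^{-1}B\leq0$. Your explicit handling of the singular case (solutions of $(I-A)\bar x=B\overline u$ are then absent or form a positive-dimensional affine set, so no single equilibrium point is admitted) is in fact slightly more careful than the paper's passing appeal to stabilizability for the same step.
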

\begin{proof}
    The equilibrium conditions of \eqref{eq:sat_DE} are $\bar x=A\bar x+B\bar u$. Since $(A,B)$ is stabilizable, the equilibrium point
    \begin{equation}\label{eq:ControlAuthEq}
        \bar x = (I-A)^{-1}B\bar u
    \end{equation}
    exists and is unique if and only if $I-A$ is invertible. To ensure $\bar z \in\overline{\mc S}$, $\bar x$ must satisfy $-K\bar x\geq\bar u$. By substituting \eqref{eq:ControlAuthEq}, this condition becomes $(1+K(I-A)^{-1}B)\bar u\leq0$. To conclude the proof, it is sufficient to note that $0\in\mathrm{Int}~\mc U$ implies $\bar u>0$.
\end{proof}

When Lemma \ref{lmm:ControlAuth} is applicable, the system admits undesirable conditions for which the saturated input $\bar u$ is unable to steer the state towards the target reference. To exclude these points from our set, we introduce the \textbf{\emph{control authority constraint}} 
\begin{equation}\label{eq:control_authority}
-K\hat x(k|z)\leq -(1-\tfrac\epsilon2)K\, \overline x,~\forall k\in\ints.
\end{equation}
Note that the use of $\tfrac{\epsilon}{2}$ is sufficient to ensure
\begin{equation}
-KG_x r< -(1-\tfrac\epsilon2)K\, \overline x,~~\forall r\in(1-\epsilon)\mc R
\end{equation}
since $G_ur\leq(1-\epsilon)\bar u$ implies $-KG_xr\leq -(1-\epsilon)K\,\bar x$. If the conditions of Lemma \ref{lmm:ControlAuth} do not hold, \eqref{eq:control_authority} is not required. Noting that the $k$-step constraint $\hat y(k|z)\in\mc Y$ can be written
\begin{equation}
    H\begin{bmatrix}
     C &  0
    \end{bmatrix}\hat z(k|z)\leq h-HD\,\overline u,
\end{equation}
it follows from $\hat z(0|z)=z$ that both the output constraint $\hat y(0|z)\in\mc Y$ and the control authority constraint \eqref{eq:control_authority}, when applicable, are enforced if $\overline H_0z\leq \overline h_0$, with
\begin{equation}\label{eq:CPsat_init}
    \overline H_0 = \begin{bmatrix}
     HC & 0\\
     -K & 0
    \end{bmatrix},\qquad\qquad \overline h_0 = \begin{bmatrix}
    h-HD\,\overline u\\
    (\tfrac\epsilon2-1)K\,\overline x
    \end{bmatrix}.
\end{equation}
It then follows from \eqref{eq:sat_DE} that, $\forall k\in[0,\kappa(z)]$, the output and control authority constraints are enforced if $\overline H_kz\leq \overline h_k$, where $(\overline H_k,\overline h_k)$ satisfy the recursion
\begin{equation}\label{eq:CPsat_rec}
    \overline H_{k+1} = \overline H_k
   \begin{bmatrix}
   A& 0\\ 0 & 1
   \end{bmatrix},\qquad \overline h_{k+1}=\overline h_k-\overline H_kB\,\overline u.
\end{equation}

\begin{prop}\label{prop:ConstraintRecursion2}
Given the recursion \eqref{eq:CPsat_init}-\eqref{eq:CPsat_rec}, the set 
\begin{equation}\label{eq:Q_sat}
    \overline{\mc Q}=\overline{\mc S}\cap\bigcap_{k=0}^\infty\left\{z\:\left|\: \overline H_k z\leq  \overline h_k\right.\right\}
\end{equation}
is compact and finitely determined. In addition, $\forall z\in\overline{\mc Q}$, there exists $\kappa(z)\in\ints$ satisfying
\begin{subequations}
\begin{align}
        \hat z(k|z)&\in\overline{\mc Q},\qquad \forall k\in[0,\kappa(z)]\\
        \overline H_k\,\hat z(\kappa(z)\!+\!1|z)&\leq \overline h_k,\qquad \forall k\in\ints\\
        \hat z(\kappa(z)\!+\!1|z)&\not\in\overline{\mc S}.
\end{align}
\end{subequations}

\end{prop}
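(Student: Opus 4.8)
The plan is to mirror the structure of the proof of Proposition~\ref{prop:ConstraintRecursion}, the essential new difficulty being that the saturated dynamics \eqref{eq:sat_DE} evolve under $A$, which is only stabilizable rather than Schur, so the compactness argument of \cite{gilbert1991linear} cannot be invoked verbatim. I would therefore (i) prove compactness directly from observability, (ii) replace the Schur-based finite-determination argument with a finite-exit argument driven by the control authority constraint \eqref{eq:control_authority}, and (iii) deduce the three trajectory relations from the shift identity associated with the recursion \eqref{eq:CPsat_rec}.

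For compactness, closedness is immediate since $\overline{\mc Q}$ is an intersection of the closed region $\overline{\mc S}$ with closed half-spaces. For boundedness, note that $\overline H_0$ in \eqref{eq:CPsat_init} contains the output rows $HC$, so the propagated constraints $\overline H_k z\leq\overline h_k$ encode $C\hat x(k|z)\in\mc Y-D\,\overline u$ for every $k\in\ints$. Writing $\hat x(k|z)=A^kx+\sum_{j=1}^kA^{j-1}B\,\overline u$, the affine term is fixed, so each $CA^kx$ lies in a fixed compact set uniformly in $z$; stacking $k=0,\dots,n-1$ and using observability of $(A,C)$ bounds $x$. Since $r\in(1-\epsilon)\mc R$ is already bounded, $\overline{\mc Q}$ is compact. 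This replaces the citation used in Proposition~\ref{prop:ConstraintRecursion}.

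The core is finite determination. First I would record the shift identity $\overline H_k\,\hat z(m|z)=\overline H_{k+m}z+\overline h_k-\overline h_{k+m}$, obtained by telescoping \eqref{eq:CPsat_rec} along \eqref{eq:sat_DE}; it gives $\overline H_k\hat z(m|z)\leq\overline h_k$ for all $k$ whenever $z\in\overline{\mc Q}$, which yields the first two displayed relations once $\kappa(z)$ is defined as the last instant the trajectory lies in $\overline{\mc S}$, the third holding by that definition. It then remains to prove $\kappa(z)<\infty$. Suppose the virtual trajectory stays in $\overline{\mc S}$ forever; compactness forces the unstable part of $e_0=x-\bar x$ to vanish, so $\{\hat x(k|z)\}$ is bounded and, since Lemma~\ref{lmm:ControlAuth} makes $I-A$ invertible (no eigenvalue at $1$), the Ces\`aro mean of $A^ke_0$ is zero, whence the running average of $-K\hat x(k|z)$ equals $-K\bar x$. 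But \eqref{eq:control_authority} caps each $-K\hat x(k|z)$ by $(1-\tfrac\epsilon2)(-K\bar x)<-K\bar x$, contradicting that average. Hence the trajectory leaves $\overline{\mc S}$ in finite time. When the hypotheses of Lemma~\ref{lmm:ControlAuth} fail, \eqref{eq:control_authority} is absent, but then \eqref{eq:sat_DE} admits no equilibrium inside $\overline{\mc S}$ (or, if $I-A$ is singular, any drift along the unit eigenvalue violates the output bound), so a bounded orbit again cannot remain in $\overline{\mc S}$ indefinitely. Finally $k^{*}=\max_{z\in\overline{\mc Q}}\kappa(z)$ is finite by compactness, and the shift identity renders every constraint with $k>k^{*}$ redundant.

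I expect the finite-exit step to be the main obstacle. The delicate points are (i) the uniformity of the exit time over the compact set $\overline{\mc Q}$, for which the strict margin $\tfrac\epsilon2$ in \eqref{eq:control_authority} is precisely the device that prevents $\kappa(z)$ from diverging along trajectories that asymptotically graze the bad equilibrium, and (ii) the degenerate cases where $A$ carries marginal modes away from $1$, where the Ces\`aro argument must be combined with oscillation averaging to exclude an orbit that stays in $\overline{\mc S}$ while still respecting \eqref{eq:control_authority}. Compactness and the shift identity are routine by comparison.
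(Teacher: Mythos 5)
Your proposal follows the same architecture as the paper's proof---compactness, the shift property (exit from $\overline{\mc Q}$ can only occur through exit from $\overline{\mc S}$), absence of equilibria forcing a finite exit time, and compactness giving a uniform bound $k^*$---but it diverges on the two steps the paper treats most tersely, and the divergence is instructive. On compactness, the paper simply cites \cite[Theorem 2.1]{gilbert1991linear}; your direct observability argument is correct, but your stated reason for avoiding the citation is misplaced: the boundedness part of that theorem rests on observability of $(A,C)$ and boundedness of the output set, not on $A$ being Schur (Schur-ness enters Gilbert and Tan's \emph{finite determination} result), so the citation is legitimate and your derivation essentially reconstructs it for the affine dynamics \eqref{eq:sat_DE}. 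The substantive difference is the finite-exit step. The paper's argument is literally ``$\overline{\mc Q}$ contains no equilibrium points, hence the linear-affine system exits in finite time,'' which, as stated, is not a valid implication: a linear-affine map can possess compact invariant sets containing no equilibrium (e.g., circles around $\bar x$ when $A$ has semisimple unit-circle eigenvalues different from $1$), and Proposition~\ref{prop:ConstraintRecursion2}, unlike Proposition~\ref{prop:ConstraintShare}, carries no ``no limit cycles'' hypothesis. Your Ces\`aro-mean argument closes exactly this gap: along a bounded orbit that never leaves $\overline{\mc S}$, the running average of $-K\hat x(k|z)$ tends to $-K\bar x$, while \eqref{eq:control_authority} caps every term at $(1-\tfrac{\eps}{2})(-K\bar x)$, strictly below $-K\bar x$ since $-K\bar x\geq\overline u>0$. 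That is the rigorous version of what the paper merely asserts, and it is the main added value of your write-up; it also correctly identifies the $\tfrac{\eps}{2}$ margin as the mechanism that does the work.

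Three points still need tightening. First, the case where Lemma~\ref{lmm:ControlAuth} fails with $I-A$ invertible: your claim that \eqref{eq:sat_DE} admits no equilibrium in $\overline{\mc S}$ must be checked for every $r\in(1-\eps)\mc R$, not only $r=0$ (which is all Lemma~\ref{lmm:ControlAuth} covers). It does go through: $(\bar x,r)\in\overline{\mc S}$ requires $(G_u+KG_x)r-K\bar x\geq\overline u$, and since $G_x=(I-A)^{-1}BG_u$ this is equivalent to $(1+K(I-A)^{-1}B)\,G_ur\geq(1+K(I-A)^{-1}B)\,\overline u$, i.e.\ to $G_ur\geq\overline u$ when $1+K(I-A)^{-1}B>0$, contradicting $G_ur\leq(1-\eps)\overline u$; moreover, excluding bounded oscillatory orbits in this case requires running your averaging argument on the inequality defining $\overline{\mc S}$ itself, which you gesture at but do not execute. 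When $I-A$ is singular, the clean statement is that stabilizability of $(A,B)$ with scalar input forces $B\overline u$ to lie outside the range of $I-A$, so every trajectory drifts unboundedly and must leave the compact set. Second, the uniformity of $k^*$: ``$\max_{z\in\overline{\mc Q}}\kappa(z)$ is finite by compactness'' is not automatic, since $\kappa$ is not obviously upper semicontinuous; the standard fix is to note that the sets $E_N=\{z\in\overline{\mc Q}~|~\hat z(k|z)\in\overline{\mc S},~\forall k\in[0,N]\}$ are closed and nested, so if all were non-empty their intersection would contain a point that never exits, contradicting pointwise finite exit. Third, the final inference---from the uniform bound $k^*$ to redundancy of every constraint with $k>k^*$---is stated in your proposal at exactly the paper's level of rigor and inherits the same leap: redundancy must hold for every $z\in\overline{\mc S}$ satisfying the first $k^*$ constraints, not only for $z\in\overline{\mc Q}$, and leaving $\overline{\mc S}$ does not by itself guarantee the later virtual constraints. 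Since you mirror the paper there, this is not a defect relative to the benchmark, but it is the remaining soft spot of both proofs.
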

\begin{proof}
$\overline{\mc Q}$ is compact due to \cite[Theorem 2.1]{gilbert1991linear}. 
By construction, \eqref{eq:Q_sat} is such that $\hat z(k|z)\in\overline{\mc Q}$ entails $\overline H_j\,\hat z(k\!+\!1|z)\!\leq \overline h_j,\forall j\!\in\!\ints$. Thus, $\hat z(k+1|z)\not\in\overline{\mc Q}$ can only be satisfied if $\hat z(k+1|z)\not\in\overline{\mc S}$. Moreover, since the set $\overline{\mc Q}$ does not contain any equilibrium points, the linear-affine system \eqref{eq:sat_DE} necessarily admits a finite time $\kappa(z)$ such that $\hat z(\kappa(z)+1|z)\not\in\overline{\mc Q}$. Finally, the solution to $k^*=\max\,\kappa(z),~\mathrm{s.t.}~z\in\overline{\mc Q}$ is finite since $\overline{\mc Q}$ is compact. Thus, the constraints $\overline H_kz\leq \overline h_k$ are redundant for all $k>k^*$.\bigskip
\end{proof}
Given $\overline{\mc H}_0=\{z~|~\overline H_0z\leq \overline h_0\}$, we note that \eqref{eq:Q_sat} can be redefined as the output of the set function
\begin{equation}\label{eq:setOP_sat}
    \overline{\mc Q}=\overline f(\overline{\mc S},\overline{\mc H}_0).
\end{equation}
This function is detailed in Algorithm 2.\smallskip

\begin{algorithm}[t]
\caption{Constraint Propagation, Upper-saturated}\label{alg:C_prop2}
\begin{algorithmic}[1]
  \vspace{2 pt}\Statex
          \textbf{Inputs:} Polyhedron $\overline{\mc S}$, constraints: $\overline H_0$, $\overline h_0$
  \vspace{2 pt}\Statex
          \textbf{Outputs:} Polyhedron $\overline {\mc Q}$, constraints: $\overline H_\infty$, $\overline h_\infty$
          \vspace{-5 pt}
          \Statex \hrulefill
          \vspace{3 pt}
          \State $\overline H_\infty=[~]$, $\overline h_\infty=[~]$, $k=0$

        \While{$\textrm{length}(\overline h_k)\neq0$}
          \State $(\overline H_{k+1},\overline h_{k+1})\leftarrow \eqref{eq:CPsat_rec}$\hfill \emph{Constraint Update}
          \If{$k=1$}\hfill \emph{Empty Set Prevention}
          \For{$\ell\in[1,\mathrm{length}(\overline h_k)]$}
          \State $z^*_\ell\leftarrow\eqref{eq:RowRed2}$ 
          \If{$[\overline H_{1}]_\ell\: z^*_\ell\leq[\overline h_{1}]_\ell$} 
          \State Eliminate $\ell$-th row of $(\overline H_{1},\overline h_{1})$
          \EndIf
          \EndFor
          \State $(\overline H_{2},\overline h_{2})\leftarrow \eqref{eq:CPsat_rec}$
          \EndIf
          \State Algorithm \ref{alg:C_prop}, lines \texttt{4-9}\hfill\emph{Row Reduction}
       
          \State $\overline H_\infty=[\overline H_\infty;\overline H_k]$, $\;\overline h_\infty=[\overline h_\infty;\overline h_k]$\hfill\emph{Add Constraints}
          \State $\quad\: k = k+1$
          \EndWhile
          \State $\overline{\mc Q} = \overline{\mc S} \cap \{z~|~\overline H_\infty z\leq \overline h_\infty\}$
\State \textbf{return} $\mc Q$, $H_\infty$, $h_\infty$
\end{algorithmic}
\end{algorithm}

\noindent \emph{Empty Set Prevention}

Unlike Proposition \ref{prop:ConstraintRecursion}, Proposition \ref{prop:ConstraintRecursion2} does not guarantee that $\overline{\mc Q}$ is non-empty. In fact, the constraint recursion \eqref{eq:CPsat_rec}-\eqref{eq:Q_sat} will output $\overline{\mc Q}=\emptyset$ whenever $A$ is not Schur. However, it should be noted that \eqref{eq:Q_sat} does not take into account the fact that \eqref{eq:sat_DE} is only valid for $k\in[0,\kappa(z)]$. Since $\hat z(\kappa(z)+1|z)\not\in\overline{\mathcal{S}}$, there is no need to enforce constraints on $\hat z(\kappa(z)+i|z)$, with $i\geq2$.

Based on these considerations, we will eliminate all the constraints where the $2$-step violation is reliant on $\hat z(1|z)\not\in\overline{\mc S}$. To identify these constraints, we solve the LP
\begin{equation}\label{eq:RowRed2}
    \begin{array}{rrll}
        z^*_\ell=\textrm{argmax} & [\overline H_1]_\ell\: z \\
        \mathrm{s.t.} & [\overline H_2]_\ell \:z&\!\!\!\!\!\leq [\overline h_2]_\ell,\\
         & \overline H_0 \:z&\!\!\!\!\!\leq \overline h_0,\\&
        z&\!\!\!\!\!\in \overline{\mc S},
    \end{array}
\end{equation}
and eliminate every row that satisfies $[\overline H_1]_\ell\: z^*_\ell\leq[\overline h_1]_\ell$. The constraint recursion is then \emph{recomputed} starting from the row-reduced $(\overline H_1,\overline h_1)$, which ensures 
\begin{equation}
    \{z~|~\overline H_1z\leq\overline h_1\}\setminus\bigcap_{k=2}^\infty\{z~|~\overline H_kz\leq\overline h_k\}\neq\emptyset,
\end{equation}
thereby guaranteeing that $\overline{\mc Q}$ is non-empty.\bigskip

\noindent \emph{Relationship to $\mc Q$ and $\underline{\mc Q}$}

By construction, the sets $\mc Q$ and $\overline{\mc Q}$ are contiguous and share the saturation boundary $\{z~|~G_ur-K(x-G_xr)=\overline u\}$. Conversely, the set $\overline{\mc Q}$ does not share any boundaries with its lower-saturated analogous $\underline{\mc Q}$. Since $z\in\overline{\mc Q}$ implies the existence of a finite $k$ such that $\hat z(k+1|z)\not\in\overline{\mc Q}$, it is reasonable to wonder under what conditions the system constraints will be satisfied from time $k+1$ onward. Given $\hat z(j|z)\in\overline{\mc Q},~\forall j\in[0,k]$, the constraints are trivially guaranteed if $\hat z(k+1|z)\in\tilde{\mc O}_\infty$. However, additional care is needed to address the cases $\hat z(k+1|z)\in\mc Q\smallsetminus\tilde{\mc O}_\infty$ and $\hat z(k+1|z)\in\underline{\mc Q}$.

\section{Constraint Sharing}\label{sec:ConstrShare}
Consider the set $\tilde\Omega_0=\mc Q_0\cup\overline{\mc Q}\,\!_0\cup\underline{\mc Q}\,\!_0$, where
\begin{subequations}\label{eq:Q_0}
\begin{align}
    \mc Q_0=f(\mc S,\mc H_0),\\
    \overline{\mc Q}\,\!_0=\overline f(\overline{\mc S},\overline{\mc H}_0),\\
    \underline{\mc Q}\,\!_0=\underline f(\underline{\mc S},\underline{\mc H}_0),
\end{align}
\end{subequations}
with $f(\cdot,\cdot)$ and $\overline{f}(\cdot,\cdot)$ featured in \eqref{eq:setOP_nonsat} and \eqref{eq:setOP_sat}, respectively, and $\underline{f}(\cdot,\cdot)$ analogous to $\overline{f}(\cdot,\cdot)$. The set $\tilde\Omega_0 = \mc Q_0\cup\overline{\mc Q}\,\!_0\cup\underline{\mc Q}_0$ represents the set of initial conditions for which the output constraints are satisfied as long as $\hat z(k|z)$ belongs to the same saturation region as the initial condition $z$. Since the system is allowed to transition between regions, however, the set $\tilde \Omega_0$ is not invariant. The objective of this section is to share constraints between regions so that transitioning from one region to the next cannot cause constraint violation.\medskip

To this end, we now wish define $\mc Q_1\subseteq\mc Q_0$ such that, if there exists $\kappa(z)\in\ints$ satifying $\hat z(k|z)\in\mc Q_1,~\forall k=[0,\kappa(z)],$ and $\hat z(\kappa(z)\!+\!1|z)\not\in\mc Q_1$, then $\hat z(\kappa(z)+1|z)\in\overline{\mc Q}\,\!_0\cup\underline{\mc Q}\,\!_0$. This set can be obtained as
\begin{equation}
    \mc Q_1=f\bigl(\:\mc Q_0\:,\:\{\overline{\mc Q}\,\!_0\smallsetminus\overline{\mc S}\}\cup\{\underline{\mc Q}\,\!_0\smallsetminus\underline{\mc S}\}\:\bigr).
\end{equation}
Indeed, it follows from Proposition \ref{prop:ConstraintRecursion} that the transition out of $\mc Q_1$ must necessarily satisfy $\hat z(k+1|z)\not\in\mc S$ as well as $\hat z(k+1|z)\in\:\{\overline{\mc Q}\,\!_0\smallsetminus\overline{\mc S}\}\cup\{\underline{\mc Q}\,\!_0\smallsetminus\underline{\mc S}\}$. Proposition \ref{prop:ConstraintRecursion2} enables us to define $\overline{\mc Q}\,\!_1$ and $\underline{\mc Q}\,\!_1$ in a similar manner. Thus, the set  $\tilde\Omega_1=\mc Q_1\cup\overline{\mc Q}\,\!_1\cup\underline{\mc Q}\,\!_1$ can be interpreted as the set of initial conditions for which constraint enforcement is guaranteed  as long as the trajectory $\hat z(k|z)$ features only $1$ transition between the saturation regions.\medskip

Based on this intuition, we define $\tilde\Omega_i=\mc Q_i\cup\overline{\mc Q}\,\!_i\cup\underline{\mc Q}\,\!_i$ as the set of initial conditions for which constraint enforcement is guaranteed as long as the system trajectories feature no more than $i$ transitions between the regions $\mc S$, $\overline{\mc S}$, and $\underline{\mc S}$. This is achieved by introducing the set recursion
\begin{subequations}\label{eq:UpdateQ_i}
\begin{align}
    \mc Q_{i+1}=f\bigl(\:\mc Q_i\:,\:\{\overline{\mc Q}\,\!_i\smallsetminus\overline{\mc Q}\,\!_{i-1}\}\cup\{\underline{\mc Q}\,\!_i\smallsetminus\underline{\mc Q}\,\!_{i-1}\}\:\bigr),\label{eq:Qup_nonsat}\\
    \overline{\mc Q}\,\!_{i+1}=\overline f\bigl(\:\overline{\mc Q}\,\!_i\:,\:\{\mc Q_i\smallsetminus\mc Q_{i-1}\}\cup\{\underline{\mc Q}\,\!_i\smallsetminus\underline{\mc Q}\,\!_{i-1}\}\:\bigr),\\
    \underline{\mc Q}\,\!_{i+1}=\underline f\bigl(\:\underline{\mc Q}\,\!_i\:,\:\{\mc Q_i\smallsetminus\mc Q_{i-1}\}\cup\{\overline{\mc Q}\,\!_i\smallsetminus\overline{\mc Q}\,\!_{i-1}\}\:\bigr),
\end{align}
\end{subequations}
where $(\mc Q_0,\overline{\mc Q}\,\!_0,\underline{\mc Q}\,\!_0)$ are given in \eqref{eq:Q_0} and, for consistency, $(\mc Q_{\,\textrm{-}1},\overline{\mc Q}\,\!_{\,\textrm{-}1},\underline{\mc Q}\,\!_{\,\textrm{-}1})$ are equal to $(\mc S,\overline{\mc S},\underline{\mc S})$.

\begin{algorithm}[t]
\caption{ISOAS Computation}\label{alg:omega_inf set}
\begin{algorithmic}[1]
  \vspace{5 pt}\Statex \begin{center}
          \textbf{Initialization}\end{center}
          \State $(\mc Q_{\,\textrm{-}1},\overline{\mc Q}\,\!_{\,\textrm{-}1},\underline{\mc Q}\,\!_{\,\textrm{-}1})\leftarrow\eqref{eq:Nonsat}\!-\!\eqref{eq:Lowsat}$. 
          \State  $(H_{0,0},h_{0,0})\leftarrow\eqref{eq:CPlin_init}$, 
          \State $(\overline H_{0,0}, \overline h_{0,0})\leftarrow\eqref{eq:CPsat_init}$, $(\underline H_{0,0}, \underline h_{0,0})\leftarrow\eqref{eq:CPsat_init}^*$.
          
          \Statex \begin{center}
          \vspace{5 pt}\textbf{Main Loop}\end{center}
                    \State $i = 0$
          \While{$\textrm{length}(h_{i,0})+\textrm{length}(\overline{h}_{i,0})+\textrm{length}(\underline{h}_{i,0})\neq0$}
          \vspace{3 pt}\Statex {Constraint Propagation}
          \State $(\mc Q_i,H_{i},h_{i})=\textrm{Algorithm}\;\,\ref{alg:C_prop}~(\mc Q_{i-1},H_{i,0},h_{i,0})$
          \State $(\overline{\mc  Q}_i,\overline H_{i},\overline h_{i})=\textrm{Algorithm}~\ref{alg:C_prop2}\;\,(\overline{\mc Q}_{i-1},\overline H_{i,0},\overline h_{i,0})$
          \State $(\underline{\mc  Q}_i,\underline H_{i},\underline h_{i})=\textrm{Algorithm}~\ref{alg:C_prop2}^*(\underline{\mc Q}_{i-1},\underline H_{i,0},\overline h_{i,0})$
          \vspace{5 pt} \Statex {Constraint Sharing}
          \For {$\ell\in[1,\mathrm{length}(\overline{\eta}_i)]$}\hfill\emph{Erosion Prevention}
          \State $z^*_\ell \leftarrow\eqref{eq:RowRed3}$
          \If{$[\overline{\mc H}_{i}]_\ell\: z^*_\ell\leq[\overline {\eta}_{i}]_\ell$} 
          \State Eliminate $\ell$-th row of $(\overline{\mc H}_i,\overline\eta_i)$
          \EndIf
          \EndFor
          \State Repeat \texttt{9-14} for lower-saturated region
          \State $H_{i+1,0}=[\overline{H}_{i};\underline{H}_{i}],~ h_{i+1,0}=[\overline{h}_{i};\underline{h}_{i}]$
          \State $\overline H_{i+1,0}=[{H}_{i};\underline{H}_{i}],~ \overline{h}_{i+1,0}=[{h}_{i};\underline{h}_{i}]$
          \State $\underline H_{i+1,0}=[\overline{H}_{i};{H}_{i}],~\underline{h}_{i+1,0}=[\overline{h}_{i};{h}_{i}]$
          \State $i = i+1$
      \EndWhile
\State \textbf{return} $\tilde\Omega_\infty= \mc Q_i\cup\overline{\mc Q}\,\!_i\cup\underline{\mc Q}\,\!_i $
\end{algorithmic}
\end{algorithm}

\begin{prop}\label{prop:ConstraintShare}
Given the recursion \eqref{eq:Q_0}-\eqref{eq:UpdateQ_i}, the set
\begin{equation}\label{eq:Om_inf}
\tilde\Omega_\infty=\lim_{i\to\infty}\left(\mc Q_i\cup\overline{\mc Q}\,\!_i\cup\underline{\mc Q}\,\!_i\right).
\end{equation}
satisfies  $\tilde{\mc O}_\infty\subset\tilde\Omega_\infty\subseteq\Omega_\infty$. Additionally, if \eqref{eq:sat_traj} does not feature any limit cycles contained in $\tilde\Omega_\infty$, then $\tilde\Omega_\infty$ is polyhedral, finitely determined, safe, and forward invariant.
\end{prop}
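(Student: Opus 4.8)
The plan is to establish the two set inclusions separately and then, under the no-limit-cycle hypothesis, verify the four structural properties in the order \emph{safe}, \emph{polyhedral}/\emph{finitely determined}, \emph{forward invariant}.

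To show $\tilde{\mc O}_\infty\subset\tilde\Omega_\infty$, I would first recall that every trajectory emanating from $\tilde{\mc O}_\infty$ under the linear law \eqref{eq:lin_prestab} is output-admissible and never saturates, so it remains in $\mc S$ for all $k$; by Proposition~\ref{prop:ConstraintRecursion} this gives $\tilde{\mc O}_\infty\subseteq\mc Q_0$. I would then argue by induction on $i$ that $\tilde{\mc O}_\infty\subseteq\mc Q_i$. The rows appended when forming $\mc Q_{i+1}$ in \eqref{eq:Qup_nonsat} originate from $\{\overline{\mc Q}\,\!_i\smallsetminus\overline{\mc Q}\,\!_{i-1}\}\cup\{\underline{\mc Q}\,\!_i\smallsetminus\underline{\mc Q}\,\!_{i-1}\}$, i.e. from exit conditions of the saturated regions; since no trajectory starting in $\tilde{\mc O}_\infty$ ever enters $\overline{\mc S}$ or $\underline{\mc S}$, these conditions are inactive on $\tilde{\mc O}_\infty$, so the propagation $f(\mc Q_i,\cdot)$ cannot erode it. Passing to the limit yields the inclusion, and strictness follows because $\tilde\Omega_\infty$ additionally contains saturating points of $\overline{\mc Q}\,\!_\infty$ and $\underline{\mc Q}\,\!_\infty$ that are excluded from $\tilde{\mc O}_\infty$ by definition. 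For the reverse inclusion $\tilde\Omega_\infty\subseteq\Omega_\infty$, I would formalize the interpretation stated just before the proposition: each $\tilde\Omega_i$ certifies $\hat y(k|z)\in\mc Y$ for every trajectory undergoing at most $i$ transitions between $\mc S$, $\overline{\mc S}$, $\underline{\mc S}$. This is proved by induction on the transition count, using the initial rows $H_0,\overline H_0,\underline H_0$ (which encode $\hat y(0|z)\in\mc Y$) together with the sharing identities, so that admissibility is inherited across each region change. Since $\tilde\Omega_\infty$ is the limit over all $i$, any $z\in\tilde\Omega_\infty$ produces an admissible output for all $k\in\ints$, hence $z\in\Omega_\infty$.

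Under the no-limit-cycle hypothesis, I would then establish the properties as follows. \emph{Safety} is immediate, since the row enforcing $\hat y(0|z)\in\mc Y$ is retained in $\mc Q_\infty$, $\overline{\mc Q}\,\!_\infty$, and $\underline{\mc Q}\,\!_\infty$. \emph{Finite determination} reduces to showing the outer sharing loop terminates: because $\tilde\Omega_\infty$ is compact and contains no limit cycle, every trajectory in it performs only finitely many region transitions, so there is a finite $i^*$ with $\tilde\Omega_{i^*+1}=\tilde\Omega_{i^*}$; combined with the finite termination of each inner propagation (Propositions~\ref{prop:ConstraintRecursion} and~\ref{prop:ConstraintRecursion2}), the total number of constraints is finite, and each limit set is accordingly an intersection of finitely many half-spaces, i.e. a polyhedron. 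Gluing the three pieces along the shared saturation boundaries yields the \emph{polyhedral} $\tilde\Omega_\infty$. \emph{Forward invariance} follows from the fixed-point character of the recursion: for $z\in\mc Q_\infty$, Proposition~\ref{prop:ConstraintRecursion} gives either $\hat z(1|z)\in\mc Q_\infty$ or $\hat z(1|z)\not\in\mc S$, and in the latter case the shared constraints force $\hat z(1|z)\in\overline{\mc Q}\,\!_\infty\cup\underline{\mc Q}\,\!_\infty$; the analogous argument for $\overline{\mc Q}\,\!_\infty$ and $\underline{\mc Q}\,\!_\infty$ invokes Proposition~\ref{prop:ConstraintRecursion2}, and an induction on $k$ completes the claim.

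I expect the main obstacle to be the termination of the sharing recursion together with the forward-invariance step at the region interfaces. The delicate cases are precisely those flagged at the close of the constraint-propagation discussion, namely transitions with $\hat z(\kappa(z)+1|z)\in\mc Q\smallsetminus\tilde{\mc O}_\infty$ or $\hat z(\kappa(z)+1|z)\in\underline{\mc Q}$, where one must verify that the constraints shared across the boundary are exactly those needed to certify the next segment of the trajectory without either eroding the set excessively or leaving a gap. Ruling out infinite shuttling between regions is what the no-limit-cycle hypothesis buys, and converting that qualitative statement into a uniform finite bound $i^*$ on the transition count over the compact set $\tilde\Omega_\infty$ is the technical crux of the argument.
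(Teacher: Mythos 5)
Your decomposition tracks the paper's proof closely: safety follows, as in the paper, from the retained initialization rows \eqref{eq:CPlin_init}, \eqref{eq:CPsat_init} and the nesting $\tilde\Omega_{i+1}\subseteq\tilde\Omega_i$; the inclusion $\tilde{\mc O}_\infty\subset\tilde\Omega_\infty$ is obtained by the same induction (the paper phrases your ``the propagation cannot erode it'' as $\tilde{\mc O}_\infty\cap\{\mc Q_i\smallsetminus\mc Q_{i+1}\}=\emptyset$, using forward invariance of $\tilde{\mc O}_\infty$); forward invariance is the same fixed-point/exit-landing argument via Propositions \ref{prop:ConstraintRecursion} and \ref{prop:ConstraintRecursion2}; and polyhedrality reduces to finiteness of the linear constraints. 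Your explicit induction on the transition count to prove $\tilde\Omega_\infty\subseteq\Omega_\infty$ is, if anything, more detailed than the paper, which leaves that inclusion implicit as a consequence of safety plus forward invariance. (A minor caveat: your strictness claim presumes the saturated pieces $\overline{\mc Q}\,\!_\infty$, $\underline{\mc Q}\,\!_\infty$ are non-empty, which is what the empty-set-prevention step is for.)

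The genuine gap is exactly where you flag it, and flagging it is not the same as closing it: you assert that compactness and the absence of limit cycles imply every trajectory in $\tilde\Omega_\infty$ makes finitely many region transitions, uniformly bounded by some $i^*$, but ``no limit cycles'' alone does not give this. If the saturated dynamics \eqref{eq:sat_DE} admit an equilibrium $\bar z\in\overline{\mc S}$ (Lemma \ref{lmm:ControlAuth}), a trajectory could converge to it and never reach $\tilde{\mc O}_\infty$, or converge to an equilibrium sitting on the saturation boundary while crossing that boundary infinitely often---neither scenario is a limit cycle. The paper closes this hole in two moves that are absent from your proposal. First, since $\tilde\Omega_\infty\subseteq\tilde\Omega_0$, the control-authority rows built into \eqref{eq:CPsat_init} (i.e., constraint \eqref{eq:control_authority}) exclude all such equilibria, so $\tilde\Omega_\infty\smallsetminus\tilde{\mc O}_\infty$ contains no equilibrium points; combined with the no-limit-cycle hypothesis, the only invariant sets in $\tilde\Omega_\infty$ are the desirable equilibria in the interior of $\tilde{\mc O}_\infty$. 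Second, because $\tilde\Omega_\infty$ is compact and forward invariant, the limit-set argument of \cite[Lemma 4.1]{khalil} forces every trajectory to enter $\tilde{\mc O}_\infty$ within a finite number of steps $k^*$, uniform over the compact set; since $\tilde{\mc O}_\infty\subset\mc S$ is invariant under the linear dynamics, each trajectory then makes at most $k^*$ transitions, hence $\tilde\Omega_{k^*}\smallsetminus\tilde\Omega_{k^*+1}=\emptyset$ and $\tilde\Omega_\infty=\tilde\Omega_{k^*}$. Without the equilibrium-exclusion step---that is, without recognizing that the control authority constraint is what makes the no-limit-cycle hypothesis sufficient---your finite-determination claim, and with it polyhedrality, does not go through.
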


\begin{proof}
The properties of $\tilde\Omega_\infty$ are addressed separately.\medskip

$\bullet~$\textbf{Polyhedral:} This property follows from the fact that all constraints are linear.\medskip

$\bullet~$\textbf{Finitely Determined:} Since $\tilde\Omega_\infty\subseteq\tilde\Omega_0$, it follows from \eqref{eq:CPsat_init} that the set $\{\tilde\Omega_\infty\smallsetminus\tilde{\mc O}_\infty\}$ does not contain any equilibrium points. In the absence of limit cycles, the only invariant set contained in $\tilde\Omega_\infty$ are the desirable equilibrium points in the interior of $\tilde{\mc O}_\infty$. Since $\tilde\Omega_\infty$ is compact and forward invariant, \cite[Lemma 4.1]{khalil} ensures that, $\forall z\in\tilde\Omega_\infty$, there exists a finite $k^*$ such that $\hat z(k|z)\in\tilde{\mc O}_\infty,~\forall k>k^*$. Since $\tilde{\mc O}_\infty\subset \mc S$, the system features at most $k^*$ transitions between the regions $\mc S$, $\overline{\mc S}$, and $\underline{\mc S}$. Therefore, $\tilde\Omega_{k^*}\smallsetminus\tilde\Omega_{k^*+1}=\emptyset$, which implies $\tilde\Omega_\infty=\tilde\Omega_{k^*}$.\medskip

$\bullet~$\textbf{Non-zero Measure:} Due to \eqref{eq:Q_nonsat}, we note $\tilde{\mc O}_\infty\subset\mc{Q}_0$. Moreover, it follows from \eqref{eq:Qup_nonsat} that
\[
\mc{Q}_i\,\smallsetminus\,\mc{Q}_{i+1}=\left\{z\in\mc{Q}_i~\left|~\exists k:\! \begin{array}{l}
    \hat z(j|z)\in\mc Q_i,~\forall j\in[0,k]\\
    \hat z(k+1|z)\in\Delta\mc Q_i
    \end{array}\!\!\!\right.\right\},
\]
where $\Delta\mc Q_i=\{\overline{\mc Q}\,\!_i\smallsetminus\overline{\mc Q}\,\!_{i-1}\}\cup\{\underline{\mc Q}\,\!_i\smallsetminus\underline{\mc Q}\,\!_{i-1}\}$. Since $\tilde{\mc O}_\infty$ is forward invariant, $\tilde{\mc O}_\infty\cap\{\mc{Q}_i\smallsetminus\mc{Q}_{i+1}\}=\emptyset$. As a result, $\tilde{\mc O}_\infty\subset\mc{Q}_i\subseteq\tilde\Omega_i,~\forall i\in\ints$. Since $\tilde{\mc O}_\infty\subset\tilde\Omega_\infty$ is non-zero measure, $\tilde\Omega_\infty$ is also non-zero measure.\medskip

$\bullet~$\textbf{Safe:} It follows from \eqref{eq:Q_nonsat}, \eqref{eq:Q_sat} that the constraint initialization conditions \eqref{eq:CPlin_init}, \eqref{eq:CPsat_init} are sufficient to ensure  $\tilde\Omega_0\subseteq\{z~|~\hat y(0|z)\in\mc Y\}$. Since $\tilde\Omega_{i+1}\subseteq\tilde\Omega_i,~\forall i\in\ints$, we show $\tilde\Omega_\infty\subseteq\tilde\Omega_0\subseteq\{z~|~\hat y(0|z)\in\mc Y\}$.\medskip

$\bullet~$\textbf{Forward Invariant:} The set $\mc{Q}_\infty$ ensures that, if there exists $k\in\ints$ such that
\begin{subequations}
\begin{align}
        \hat z(j|z)&\in\mc{Q}_\infty,\qquad \forall j\in[0,k]\\
        \hat z(k\!+\!1|z)&\not\in\mc{Q}_\infty,
\end{align}
\end{subequations}
then $\hat z(k\!+\!1|z)\in\overline{\mc Q}\,\!_\infty\:\cup\:\underline{\mc Q}\,\!_\infty$. Noting that both saturated sets $\overline{\mc Q}\,\!_\infty$ and $\underline{\mc Q}\,\!_\infty$ benefit from the analogous property, the combined set \eqref{eq:Om_inf} is forward invariant.
\end{proof}

The end result, which outputs $\tilde\Omega_\infty$ based on the system constraints, is detailed in Algorithm 3.\bigskip

\begin{figure}
        \centering
        \includegraphics[scale=0.6, trim={.7cm 0  1.2cm 0},clip]{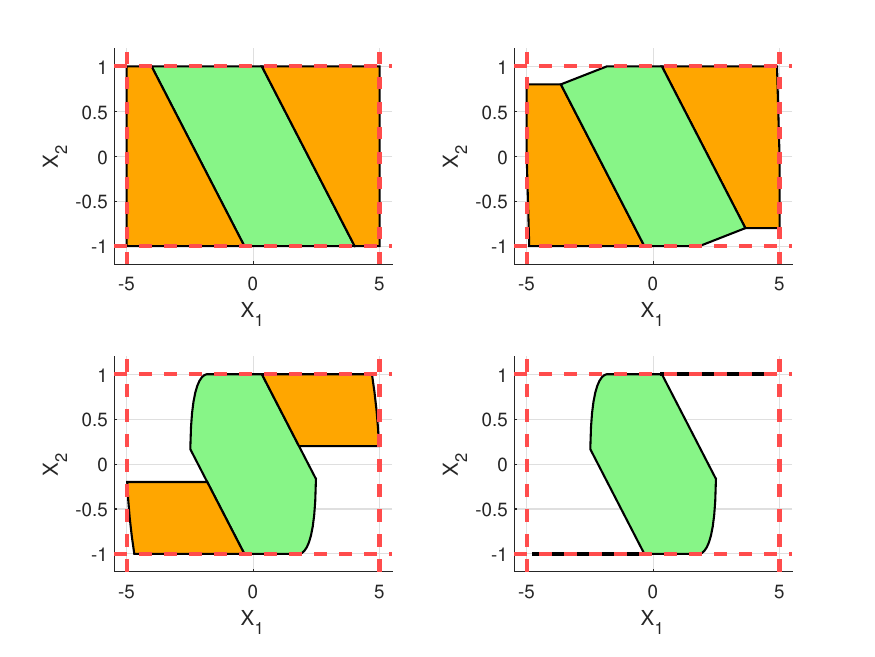}
        \caption{\small{\emph{Without Empty Set Prevention.}\\
        \textbf{Top Left:} Starting sets $\mc Q_{0,0}$ (green) and $\overline{\mc Q}\,\!_{0,0}$, $\underline{\mc Q}\,\!_{0,0}$ (orange).\\
        \textbf{Top Right:} First propagation step         $\mc Q_{0,1}$ and $\overline{\mc Q}\,\!_{0,1}$, $\underline{\mc Q}\,\!_{0,1}$.\\
        \textbf{Bottom Left:} Sixth propagation step $\mc Q_{0,6}$ and $\overline{\mc Q}\,\!_{0,6}$, $\underline{\mc Q}\,\!_{0,6}$.\\
        Note how the constraints of the non-saturated regions translate vertically, thereby making the previous set boundaries redundant.\\
        \textbf{Bottom Right:} Final results $\mc Q_0$ and $\overline{\mc Q}\,\!_0$, $\underline{\mc Q}\,\!_0$. \\
        Note how both non-saturated sets are empty.}}
        \label{fig:Empty set}
\end{figure}

\noindent \emph{Erosion Prevention}

Before implementing the constraint sharing step \eqref{eq:UpdateQ_i}, we note that some of the constraints on $\mc Q_{i}$ might cause the constraints on $\overline{\mc Q_{i}}$ to become redundant. 
To avoid sharing unnecessary (and potentially harmful) constraints, we solve the LP
\begin{equation}\label{eq:RowRed3}
    \begin{array}{rl}
        z^*_\ell=\textrm{argmax} & [\overline{\mc H}_{i}]_\ell\: z \\[3pt]
        \mathrm{s.t.} & z\in\overline{\mc S}\\
        & z\in\mc Q_i\setminus \mc S.
    \end{array}
\end{equation}
and omit any row $\ell$ that satisfies $[\overline{\mc H}_i]_\ell\:z^*_\ell\leq[\overline\eta_i]_\ell$. The same is done for the lower-saturated polyhedron $\underline{\mc Q}_i$.\bigskip

\section{Numerical Examples}

\subsection{Empty Set Prevention}

\begin{figure}
        \centering
        \includegraphics[scale=0.6,trim={.7cm 0  1.2cm 0},clip]{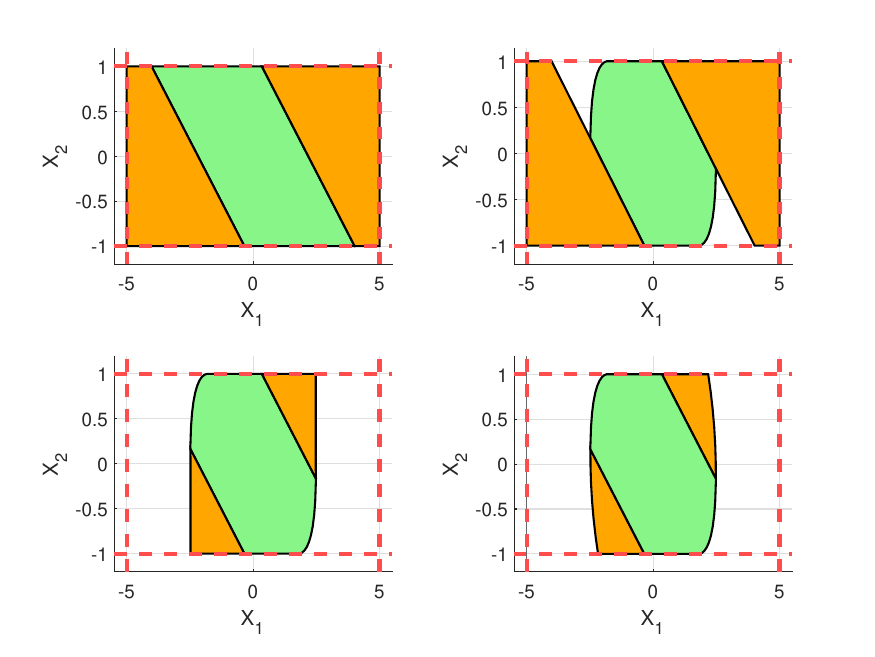}
        \caption{\small{\emph{With Empty Set Prevention.}\\
        \textbf{Top Left:} Starting sets $\mc Q_{0,0}$ (green) and $\overline{\mc Q}\,\!_{0,0}$, $\underline{\mc Q}\,\!_{0,0}$ (orange).\\ \textbf{Top Right:}  Final results $\mc Q_0$ and $\overline{\mc Q}\,\!_0$, $\underline{\mc Q}\,\!_0$.\\
        Note that $\mc Q_0$ is identical to Figure \ref{fig:Empty set}. The first propagation step $\overline{\mc Q}\,\!_{0,1}$, $\underline{\mc Q}\,\!_{0,1}$ is dropped, leading to no changes in saturated sets.\\
        \textbf{Bottom Left:} Constraint sharing step $\mc Q_{1,0}$ and $\overline{\mc Q}\,\!_{1,0}$, $\underline{\mc Q}\,\!_{1,0}$.\\
        Note how the constraints $\mc{Q}_0\!\smallsetminus\!\mc S$ transfer to the saturated regions.\\ \textbf{Bottom Right:} Final results $\mc Q_1$ and $\overline{\mc Q}\,\!_1$, $\underline{\mc Q}\,\!_1$.}}
        \label{fig:Empty prevent}
\end{figure}

Our first example showcases the need for the empty set prevention step detailed in Subsection \ref{ssec:saturated}. To this end, consider the system 
\begin{subequations}\label{eq:doubleInt}
\begin{align}
A=\begin{bmatrix} ~1~ & 0.1\\
    0 & 1
    \end{bmatrix}, \quad   B=\begin{bmatrix} 0\\
    0.1
    \end{bmatrix},\\
C= \begin{bmatrix} ~1~ & ~0~\\
    0 & 1
    \end{bmatrix}, \quad   D=\begin{bmatrix} ~0~\\
    0
    \end{bmatrix},
\end{align}
\end{subequations}
subject to the output constraints and input saturations
\begin{equation}\label{eq:ex_constraints}
    \begin{bmatrix} -5\\
    -1
    \end{bmatrix}\leq y\leq \begin{bmatrix} 5\\
    1
    \end{bmatrix},\qquad-2\leq u\leq2.
\end{equation}
The linear feedback gain $K$ is obtained using a Linear Quadratic Regulator (LQR) with $Q=I$ and $R=1$. For the purpose of this example, we introduce the following notation to identify the $j$-th step of the constraint propagation routines \eqref{eq:CPlin_rec}, \eqref{eq:CPsat_rec}. Specifically, we denote
\begin{equation}
    \mc Q_{i,j}=S\cap\bigcap_{k=0}^j\left\{z\:\left|\: H_{i,k} z\leq  h_{i,k}\right.\right\}
\end{equation}
for the non-saturated set and
\begin{equation}
    \overline{\mc Q}\,\!_{i,j}=\overline{\mc S}\cap\bigcap_{k=0}^j\left\{z\:\left|\: \overline H_{i,k} z\leq  \overline h_{i,k}\right.\right\}
\end{equation}
for the upper-saturated set (with $\underline{\mc Q}\,\!_{i,j}$ defined analogously). This notation is consistent with \eqref{eq:CPlin_rec}, \eqref{eq:CPsat_rec}, e.g. $\mc Q_{i,\infty}=\mc Q_i$. For simplicity, we limit ourselves to representing the planar cross-section corresponding to $r=0$.

\begin{figure}
        \centering
        \includegraphics[scale=0.47,trim={2.3cm 2.2cm  1.9cm 1.8cm},clip]{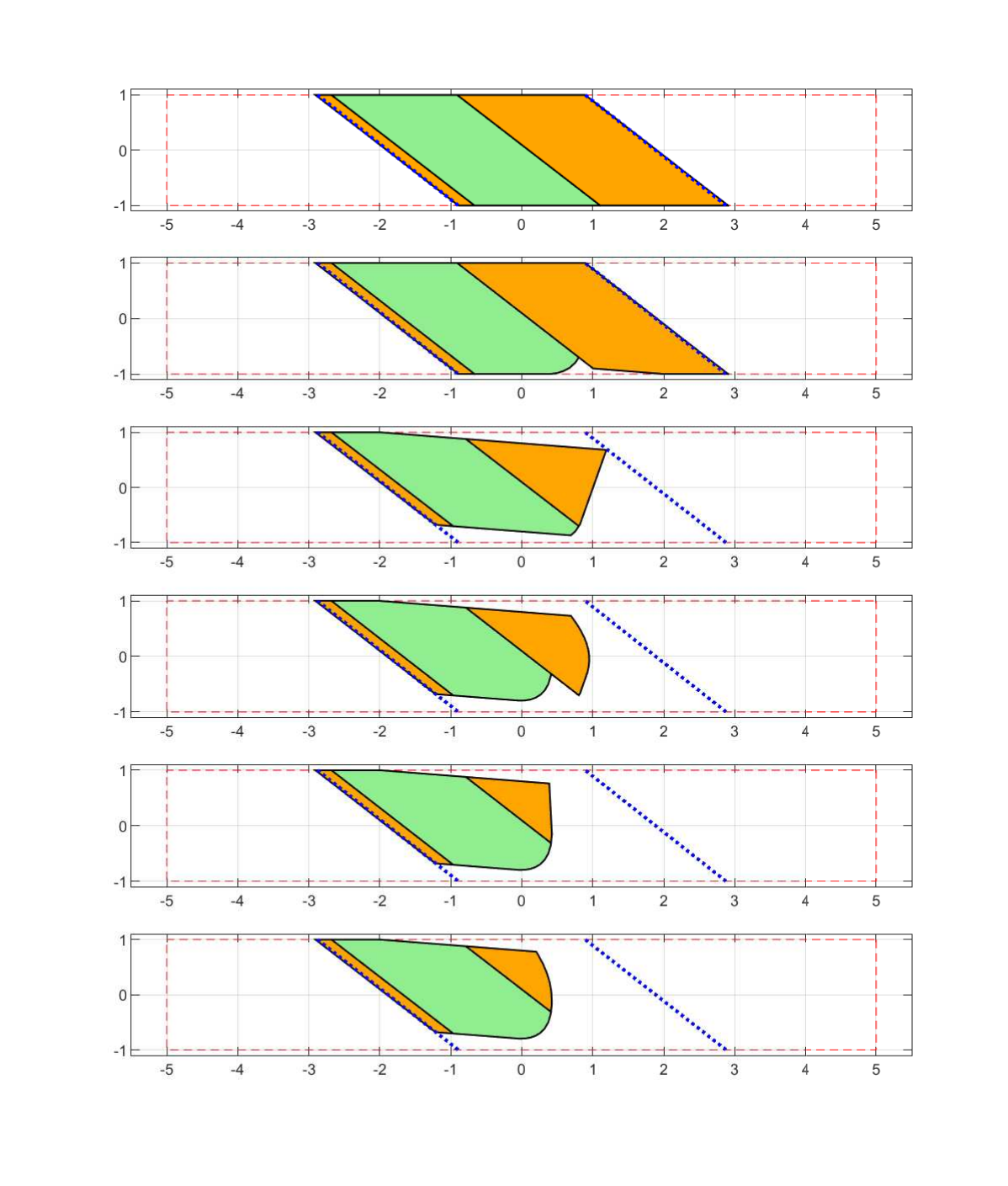}
        \caption{\small{\emph{Without Erosion Prevention}\\
        Planar cross-sections ($r=2$) of the non-saturated set $\mc{Q}$ (green) and the saturated sets $\overline{\mc Q}$, $\underline{\mc Q}$ (orange) evaluated at different stages of the algorithm. The thin red dashed lines are the output constraints and the thick dotted blue lines are the control authority constraints.\\
        \textbf{High-Top:} Starting sets $\mc Q_{0,0}$, $\overline{\mc Q}\,\!_{0,0}$, and $\underline{\mc Q}\,\!_{0,0}$.\\
        \textbf{Low-Top:} First constraint propagation $\mc Q_{0,\infty}$, $\overline{\mc Q}\,\!_{0,\infty}$, and $\underline{\mc Q}\,\!_{0,\infty}$.\\
        Note how some of the constraints featured in the saturated region $\overline{\mc Q}\,\!_{0,\infty}$ are redundant with respect to the set $\mc Q_{0,\infty}\!\smallsetminus\!\mc S$.\\
        \textbf{High-Mid:} First constraint sharing $\mc Q_{1,0}$, $\overline{\mc Q}\,\!_{1,0}$, and $\underline{\mc Q}\,\!_{1,0}$.\\
        Note how the redundant constraints of the saturated region $\overline{\mc Q}\,\!_{0,\infty}$ are (erroneously) transferred to the non-saturated region $\mc Q_{0,0}$.\\
        \textbf{Low-Mid:} Second constraint propagation $\mc Q_{1,\infty}$, $\overline{\mc Q}\,\!_{1,\infty}$, and $\underline{\mc Q}\,\!_{1,\infty}$.\\
        \textbf{High-Bottom:} Second constraint sharing $\mc Q_{2,0}$, $\overline{\mc Q}\,\!_{2,0}$, and $\underline{\mc Q}\,\!_{2,0}$.\\
        \textbf{Low-Bottom:} Final Result $\mc Q_{2,\infty}$, $\overline{\mc Q}\,\!_{2,\infty}$, and $\underline{\mc Q}\,\!_{2,\infty}$.  }}
        \label{fig:erosion1}
\end{figure}

Figure \ref{fig:Empty set} illustrates a first attempt at the computation of $\tilde\Omega_\infty$ without using \eqref{eq:RowRed2} to perform a row reduction of $\overline H_1$. In this case, \eqref{eq:CPsat_rec} converges to an empty set, which would then lead to $\tilde\Omega_\infty=\emptyset$ once the constraints are shared back into the non-saturated region. Figure \ref{fig:Empty prevent} shows a correct execution of the proposed method, whereby the LP \eqref{eq:RowRed2} is used to identify and eliminate all the constraints that would cause $\overline{\mc Q}\,\!_0=\emptyset$ and $\underline{\mc Q}\,\!_0=\emptyset$. The output constraints are then properly shared from the non-saturated set to the saturated sets. Figure \ref{fig:Empty prevent} also showcases an interesting property of $\tilde\Omega_i=\mc Q_i\cup\overline{\mc Q}\,\!_i\cup\underline{\mc Q}\,\!_i$: the intermediate results, in this case, $\tilde\Omega_0$ (Top Right), are not polyhedrons, even though the final result, in this case, $\tilde\Omega_1$ (Bottom Right), is polyhedral.

\subsection{Erosion Prevention}
Our second example showcases the need for the erosion prevention step detailed in Section \ref{sec:ConstrShare}. To this end, let
\begin{subequations}\label{eq:UnStab1}
\begin{align}
A=\begin{bmatrix} 1 & 0.1\\
    0.1 & 1
    \end{bmatrix}, \quad   B=\begin{bmatrix} 0\\
    0.1
    \end{bmatrix},\\
C= \begin{bmatrix} ~1~ & ~0~\\
    0 & 1
    \end{bmatrix}, \quad   D=\begin{bmatrix} ~0~\\
    0
    \end{bmatrix},
\end{align}
\end{subequations}
be subject to the output constraints and input saturations \eqref{eq:ex_constraints}. The linear feedback gain $K$ is obtained using a Linear Quadratic Regulator (LQR) with $Q=I$ and $R=1$. Since this system satisfies Lemma \ref{lmm:ControlAuth}, it admits the undesirable equilibria $\overline x=[-2~~0]^\top$ and $\underline x=[2~~0]^\top$. The corresponding control authority constraint are plotted in blue in Figures \ref{fig:erosion1} and \ref{fig:erosion2}.\medskip

\begin{figure}
        \centering
        \includegraphics[scale=0.47,trim={2.3cm 1.2cm  1.9cm 1.1cm},clip]{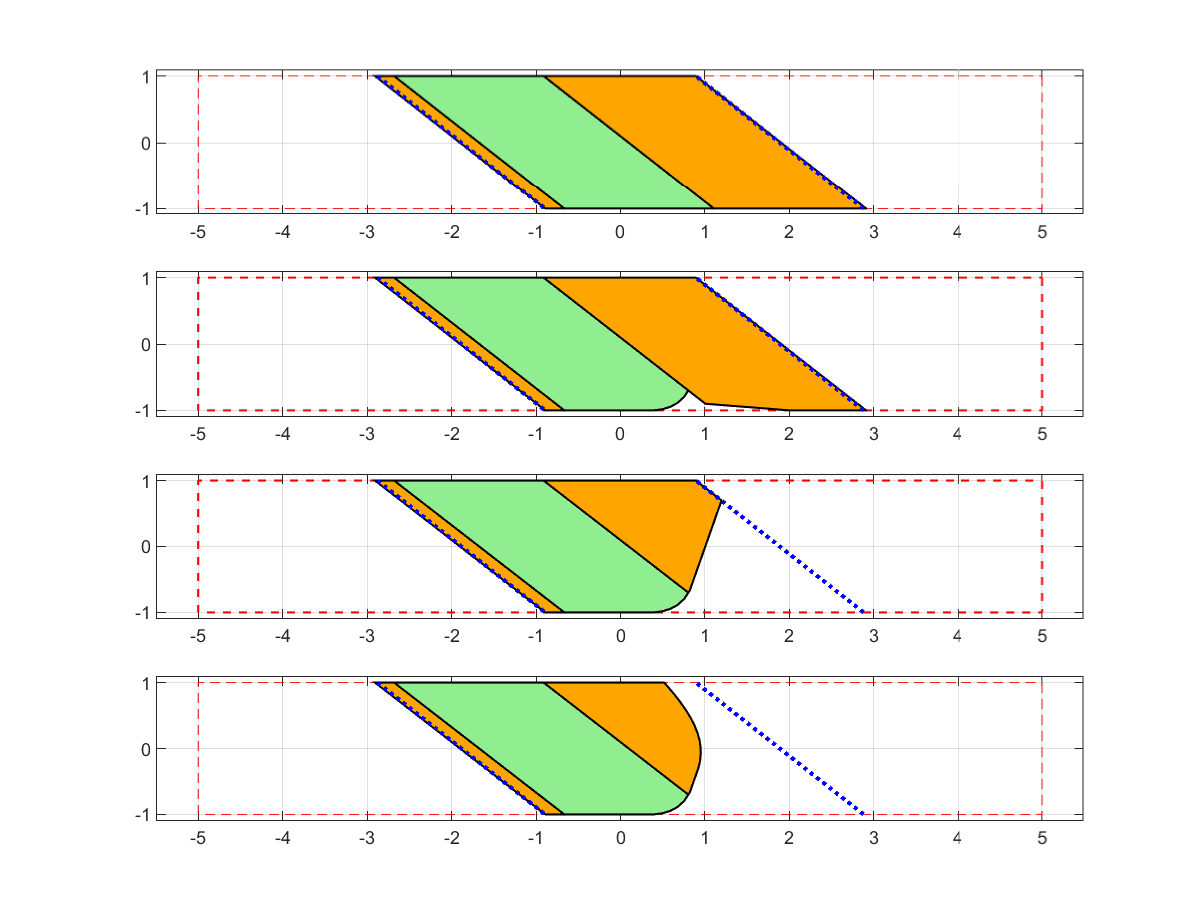}
        \caption{\small{\emph{With Erosion Prevention}\\
        Same cross-sections are featured in Fig. \ref{fig:erosion1}, but \eqref{eq:RowRed3} is used to eliminate redundant constraints before sharing.\\
        \textbf{Top:} Starting sets $\mc Q_{0,0}$, $\overline{\mc Q}\,\!_{0,0}$, and $\underline{\mc Q}\,\!_{0,0}$.\\
        \textbf{High-Mid:} First constraint propagation $\mc Q_{0,\infty}$, $\overline{\mc Q}\,\!_{0,\infty}$, and $\underline{\mc Q}\,\!_{0,\infty}$.\\
        Note how some of the constraints featured in the saturated region $\overline{\mc Q}\,\!_{0,\infty}$ are redundant with respect to the set $\mc Q_{0,\infty}\!\smallsetminus\!\mc S$.\\
        \textbf{Low-Mid:} First constraint sharing $\mc Q_{1,0}$, $\overline{\mc Q}\,\!_{1,0}$, and $\underline{\mc Q}\,\!_{1,0}$.\\
        Note how the redundant constraints of the saturated region $\overline{\mc Q}\,\!_{0,\infty}$ are ignored, thereby leading to $\mc Q_{1,0}=\mc Q_{0,\infty}$.\\
        \textbf{Bottom:} Final Result $\mc Q_{1,\infty}$, $\overline{\mc Q}\,\!_{1,\infty}$, and $\underline{\mc Q}\,\!_{1,\infty}$.  }}
        \label{fig:erosion2}
\end{figure}

Figure \ref{fig:erosion1} illustrates a first attempt at the computation of $\tilde\Omega_\infty$ without using \eqref{eq:RowRed3} to eliminate the constraints in $\overline{\mc Q}\,\!_0$, $\underline{\mc Q}\,\!_0$ that are redundant with respect to $\mc Q_0$. These unnecessary constraints cause erosion of $\mc Q_{1,0}$, ultimately leading to $\tilde{\mc O}_\infty\not\subset\tilde\Omega_\infty$. Figure \ref{fig:erosion2} shows a correct execution of the proposed method, whereby the LP \eqref{eq:RowRed3} is used to identify and eliminate all the constraints in $\overline{\mc Q}_0$, $\underline{\mc Q}_0$ that are redundant with respect to $\mc Q_0$. This prevents unnecessary constraints from transfering into the non-saturated region $\mc Q_{1,0}$, while maintaining all the established properties for the saturated regions since $\overline{\mc Q}\,\!_{1,0}\subseteq\overline{\mc Q}\,\!_0$ and $\underline{\mc Q}\,\!_{1,0}\subseteq\underline{\mc Q}\,\!_0$.

\subsection{Comparison Between $\tilde{\mc O}_\infty$, $\tilde\Omega_\infty$, and $\Omega_\infty$}
Our final example showcases how much is gained by treating input saturation as a nonlinearity (as opposed to a constraint) and how much is lost by computing a polyhedral inner approximation (as opposed to the \emph{maximal} set). To this end, we consider the open-loop unstable system in \cite{ALAMO20061515}, i.e.,
\begin{subequations}\label{eq:UnStab2}
\begin{align}
A=\begin{bmatrix} 1.1 & 1\\
    0 & 1.1
    \end{bmatrix}, \quad   B=\begin{bmatrix} 0.5\\
    1.1
    \end{bmatrix},\\
C= \begin{bmatrix} ~1~ & ~0~\\
    0 & 1
    \end{bmatrix}, \quad   D=\begin{bmatrix} ~0~\\
    0
    \end{bmatrix},
\end{align}
\end{subequations}
subject to the output constraints and input saturations 
\begin{equation}\label{eq:Ex3_Constraints}
    \begin{bmatrix} -10\\
    -10
    \end{bmatrix}\leq y\leq \begin{bmatrix} 10\\
    10
    \end{bmatrix},\qquad -1\leq u\leq1.
\end{equation}
The system is prestabilized using the feedback gain matrix $K=[0.5236~~1.1264]$. In this case, the undesirable equilibria $\overline x =(I-A)^{-1}B\,\overline u$ exist, but do not satisfy the condition $(1+K(I-A)^{-1}B)\leq0$. Thus, the control authority constraints \eqref{eq:control_authority} are not applicable. 

Figure \ref{fig:omg_inf3} shows a comparison between $\tilde{\mc O}_\infty$, $\tilde\Omega_\infty$, and $\Omega_\infty$ for this particular system. The MOAS $\tilde{\mc O}_\infty$ was obtained by setting input saturation as a constraint and is significantly smaller than the ISOAS $\tilde\Omega_\infty$ featured in this paper. The maximal set $\Omega_\infty$ was instead obtained using a brute-force method to check whether a given point $(x,r)$ satisfies \eqref{eq:Omega_inf}. As expected, $\Omega_\infty$ is larger than $\tilde\Omega_\infty$. However, its non-convex nature makes it both difficult to compute and impractical to represent. Conversely, $\tilde\Omega_\infty$ is convex and can be computed systematically.

\begin{figure}
        \centering
        \includegraphics[scale=0.8,trim={2.9cm 1cm  2.5cm .8cm},clip]{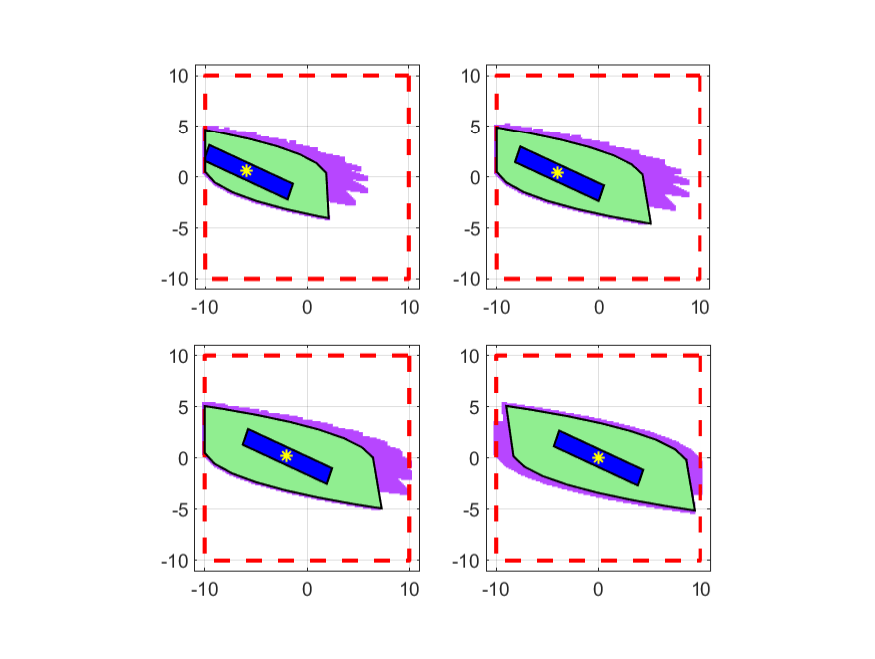}
        \caption{\small{\emph{ISOAS Verification 3.}
        Each figure features a cross-section of the following sets for a different value of the reference $r\in(1-\epsilon)\mc R$. The yellow star is $\Sigma^\epsilon$, the blue set is $\tilde{\mc O}_\infty$, the green set is $\tilde\Omega_\infty$ and the purple set is $\Omega_\infty$.
        \textbf{Top Left:} $r=6$. \textbf{Top Right:} $r=4$.
        \textbf{Bottom Left:} $r=2$. \textbf{Bottom Right:} $r=0$. }}
        \label{fig:omg_inf3}
\end{figure}

\section{Conclusion}
This paper introduced a method for the finite-time computation of a polyhedral input-saturated output-admissible set. The set is obtained by treating the input saturation as a nonlinearity, as opposed to a constraint, thereby leading to a piecewise-affine system. After segmenting the state/reference space into saturated and non-saturated regions, the output constraints are propagated within each region and shared between regions to obtain a polyhedral, safe, and positively invariant set. Redundant constraint elimination strategies ensure that the set is non-empty and finitely determined. Numerical examples show that the input-saturated output-admissible set featured in this paper can be significantly larger than the maximal output admissible set and is a reasonable polyhedral inner-approximation of the (generally non-convex) maximal input-saturated output-admissible set.


\begin{thebibliography}{10}

\bibitem{gilbert1991linear}
E.~G. Gilbert and K.~T. Tan, ``Linear systems with state and control constraints: The theory and application of maximal output admissible sets,'' {\em IEEE Trans. on Aut. Control}, vol.~36, no.~9, pp.~1008--1020, 1991.

\bibitem{garone2017reference}
E.~Garone, S.~Di~Cairano, and I.~Kolmanovsky, ``Reference and command governors for systems with constraints: A survey on theory and applications,'' {\em Automatica}, vol.~75, pp.~306--328, 2017.

\bibitem{mayne2000mpc}
D.~Q. Mayne, J.~B. Rawlings, C.~V. Rao, and P.~O.~M. Scokaert, ``Constrained model predictive control: Stability and optimality,'' {\em Automatica}, vol.~36, no.~6, pp.~789--814, 2000.

\bibitem{kolmanovsky1995maximal}
I.~Kolmanovsky and E.~G. Gilbert, ``Maximal output admissible sets for discrete-time systems with disturbance inputs,'' in {\em American Control Conference}, vol.~3, 1995.

\bibitem{652329}
S.~Tarbouriech and E.~Castelan, ``Maximal admissible polyhedral sets for discrete-time singular systems with additive disturbances,'' in {\em IEEE Conf. on Decision and Control}, vol.~4, pp.~3164--3169 vol.4, 1997.

\bibitem{rachik2007maximal}
M.~Rachik, A.~Tridane, M.~Lhous, O.~I. Kacemi, Z.~Tridane, {\em et~al.}, ``Maximal output admissible set and admissible perturbations set for nonlinear discrete systems,'' {\em Applied Mathematical Sciences}, vol.~1, no.~32, pp.~1581--1598, 2007.

\bibitem{DARUP20145574}
M.~S. Darup and M.~Mönnigmann, ``Computation of the largest constraint admissible set for linear continuous-time systems with state and input constraints,'' {\em IFAC Proceedings Volumes}, vol.~47, no.~3, pp.~5574--5579, 2014.
\newblock 19th IFAC World Congress.

\bibitem{SaturationsNOTconstraints}
A.~Cotorruelo, D.~Limon, and E.~Garone, ``Output admissible sets and reference governors: Saturations are not constraints!,'' {\em IEEE Transactions on Automatic Control}, vol.~65, no.~3, pp.~1192--1196, 2020.

\bibitem{DEDONA200257}
J.~{De Doná}, M.~Seron, D.~Mayne, and G.~Goodwin, ``Enlarged terminal sets guaranteeing stability of receding horizon control,'' {\em Systems \& Control Letters}, vol.~47, no.~1, pp.~57--63, 2002.

\bibitem{book}
T.~Hu and Z.~Lin, {\em Control systems with actuator saturation: analysis and design}.
\newblock Birkh\"auser, 01 2001.

\bibitem{1583157}
T.~Alamo, A.~Cepeda, and D.~Limon, ``Improved computation of ellipsoidal invariant sets for saturated control systems,'' in {\em IEEE Conf. on Decision and Control}, pp.~6216--6221, 2005.

\bibitem{BERTSEKAS1971233}
D.~Bertsekas and I.~Rhodes, ``On the minimax reachability of target sets and target tubes,'' {\em Automatica}, vol.~7, no.~2, pp.~233--247, 1971.

\bibitem{ROA1}
J.~M.~G. {da Silva} and S.~Tarbouriech, ``Polyhedral regions of local stability for linear discrete-time systems with saturating controls,'' {\em IEEE Transactions on Automatic Control}, vol.~44, no.~11, p.~2081–2085, 1999.

\bibitem{ROA2}
B.~E.~A. Milani, ``Piecewise-affine {L}yapunov functions for discrete-time linear systems with saturating controls,'' {\em Automatica}, vol.~38, no.~12, p.~2177–2184, 2002.

\bibitem{ALAMO20061515}
T.~Alamo, A.~Cepeda, D.~Limon, and E.~Camacho, ``A new concept of invariance for saturated systems,'' {\em Automatica}, vol.~42, no.~9, pp.~1515--1521, 2006.

\bibitem{khalil}
H.~Khalil, {\em Nonlinear Systems, Third Edition}.
\newblock Pearson, 2002.

\end{thebibliography}
\end{document}